\documentclass[a4paper,11pt]{amsart}
\usepackage{graphicx} 
\usepackage{amsfonts, amssymb, amsmath}
\usepackage{amsthm}
\usepackage{mathtools}
\usepackage[margin=1in]{geometry}
\usepackage{enumerate}
\usepackage[all]{xy}
\usepackage{color, xcolor}

\usepackage{hyperref}

\usepackage{tikz, tikz-cd}
\usetikzlibrary{3d}
\usepackage{caption}
\usepackage{subcaption}
\usepackage[all]{xy}

\newcommand{\PP}{\mathbb{P}}

\newcommand{\irr}{\operatorname{irr}}

\theoremstyle{definition}

\newtheorem{Thm}{Theorem}[section]
\newtheorem{Prop}[Thm]{Proposition}
\newtheorem{Lem}[Thm]{Lemma}
\newtheorem{Rmk}[Thm]{Remark}
\newtheorem{Cor}[Thm]{Corollary}
\newtheorem{Ex}[Thm]{Example}

\newtheorem{Question}[Thm]{Question}

\usepackage{epic}

\title{Degree of irrationality of properly elliptic surfaces}
\author{Yongnam Lee and De-Qi Zhang}
\date{August 2026}

\address{Center for Complex Geometry, Institute for Basic Science (IBS), 55 Expo-ro, Yuseong-gu, Daejeon 34126, Korea}

\email{ynlee@ibs.re.kr}

\address{Department of Mathematics, National University of Singapore, Singapore 119076, Republic of Singapore}

\email{matzdq@nus.edu.sg}

\subjclass[2020]
{Primary: 14E05; 
Secondary: 14E08, 
14J27. 
\keywords{Degree of irrationality, properly elliptic surface, gonality}}

\begin{document}

\begin{abstract}
In this paper, we study the degree of irrationality of properly elliptic surfaces with a section. We prove $\min\{\chi(\mathcal O_S),\,2\operatorname{gon}(C)\}
\leq \operatorname{irr}(S)
\leq 2\operatorname{gon}(C)$.
The lower bound is obtained from the canonical bundle formula and the Cayley--Bacharach property. We show that this bound is sharp.
We also study the behavior of the degree of irrationality in moduli. A very general properly elliptic surface with a section over a curve of genus at least two has degree of irrationality at least four, 
whereas special families with $\chi(\mathcal O_S)=1$ or $2$ have degree two. Finally, we investigate properly elliptic surfaces with $\chi(\mathcal O_S)=0$, proving a generic lower bound of four and showing that
$\operatorname{irr}(C\times E)=4$ for every hyperelliptic curve $C$ of genus at least two and every elliptic curve $E$. Our paper also includes special properly elliptic surfaces without a section. 
For Dolgachev surfaces, we exclude degree two for a very general member and construct special examples of degrees two and three. 
\end{abstract}

\maketitle

\tableofcontents

\section{Introduction}

In this paper, we work over the complex numbers. Let $X$ be a smooth projective variety of dimension $n$. Its degree of irrationality is defined by
\[\operatorname{irr}(X):=
 \min\left\{
 \deg(\varphi)\ \middle|\ 
 \varphi:X\dashrightarrow \mathbb P^n
 \text{ is dominant and generically finite}
 \right\}.\]
Thus $\operatorname{irr}(X)=1$ precisely when $X$ is rational. The degree of irrationality provides a quantitative measure of how far from rationality, but it is usually difficult to compute, even for surfaces with a relatively explicit geometric structure. This topic has been studied recently  by many mathematicians. There are many interesting problems suggested in the survey paper by Chen and Martin \cite{CM26}. Many interesting results are now known for surfaces with  Kodaira dimension $\kappa=0$ and for hypersurfaces in $\mathbb P^n$, but very few are known for other cases, even elliptic surfaces with Kodaira dimension $\kappa=1$ and a section.

In this paper we study the degree of irrationality of properly elliptic surfaces. A {\it properly elliptic surface} $S$ is a smooth minimal projective surface of Kodaira dimension $\kappa(S) = 1$. Then it has an Iitaka fibration $f: S \to C$ over a smooth projective curve $C$.
The minimality of $S$ is equivalent to that $f$ being relatively minimal. Indeed, $f$ is the only elliptic fibration on $S$ (also by Kodaira's canonical bundle formula). This rigidity makes the fibration $f$ a natural tool for studying dominant rational maps from $S$ to $\mathbb P^2$.

Let $J(S)\to C$ denote the Jacobian fibration associated with $f$, and let $\alpha$ be the index of the generic fiber of \(f\). There is a dominant rational map
$S\dashrightarrow J(S)$ of degree $\alpha^{2}$ \cite{BKL76}. Since $J(S)\to C$ has a section, quotienting its generic fiber by the elliptic involution and composing with a map $C\to\mathbb P^1$ of minimal degree gives
\[
\operatorname{irr}(J(S))
 \leq 2\operatorname{gon}(C) \ \ .
\]
This upper bound is given in \cite{Yo96}. Consequently, $\operatorname{irr}(S)
 \leq 2\alpha^{2}\operatorname{gon}(C).$ We note that for every smooth projective curve $C$ of genus $g$, the {\it gonality}
$\operatorname{gon}(C)\le
\left\lfloor\frac{g+3}{2}\right\rfloor$.

Our first purpose is to complement this upper bound with a lower bound in terms of the holomorphic Euler characteristic and the gonality of the base. 
Suppose first that \(f\) has a section. In Theorem~\ref{lower bound} we prove
\[
\min\bigl\{\chi(\mathcal O_S),\,2\operatorname{gon}(C)\bigr\}
\leq\operatorname{irr}(S)
\leq 2\operatorname{gon}(C).
\]
The same lower-bound argument applies in the presence of multiple fibers. Thus, for an elliptic surface of index \(\alpha\), we obtain
$\min\bigl\{\chi(\mathcal O_S),\,2\operatorname{gon}(C)\bigr\}
\leq\operatorname{irr}(S)
\leq 2\alpha^{2}\operatorname{gon}(C)$.
This is shown in Remark~\ref{bound}.

The lower bound is obtained from the canonical bundle formula and the Cayley–Bacharach property satisfied by a general fiber of a rational map $S\dashrightarrow\mathbb P^2$. Define the fundamental line bundle by
\[\mathcal L:=(R^1f_*\mathcal O_S)^\vee .\]
Then $\deg\mathcal L=\chi(\mathcal O_S)$. 
The canonical forms used in the argument come from sections of $\omega_C\otimes\mathcal L$. When the points of a general fiber of a rational map have distinct images on \(C\), these sections separate one point from the remaining points, contradicting the null-trace Cayley–Bacharach property (cf. \cite[Proposition 4.2]{Ba12} or \cite[Section 1]{BDELU}).

An immediate consequence concerns Jacobian elliptic surfaces over an elliptic curve. If $q(S)=1$, $f$ has a section, and $\chi(\mathcal O_S)\geq4$, then $C$ is elliptic and $\operatorname{gon}(C)=2$. Hence
$\operatorname{irr}(S)=4$. We also construct, for every $k\geq1$, elliptic surfaces with a section satisfying $q(S)=1, p_g(S)=k$, and $\kappa(S)=1$. These are obtained from Weierstrass equations over an elliptic curve with fundamental line bundle of degree $k$. The lower bound in Theorem~\ref{lower bound} is sharp. For every $k\geq2$, we construct an elliptic surface with a section such that
\[
\chi(\mathcal O_S)=2k-1,\qquad
\operatorname{gon}(C)=k,\qquad
\operatorname{irr}(S)=2k-1.
\]
The construction is obtained by pulling back a rational Jacobian elliptic surface along a morphism $C\to\mathbb P^1$ of degree $2k-1$. In particular, the inequality $\chi(\mathcal O_S)\geq 2k$ in the criterion forcing 
$\operatorname{irr}(S)\geq2k$ is optimal. We also find properly elliptic surfaces $S_m$ with $\kappa(S_m) = 1, \, \chi(\mathcal O_{S_m})=m$ and $\irr(S_m)=2$ for any integer $m\geq 3$, and remark the case when $\irr(S)=\chi(\mathcal O_S)<2\operatorname{gon}(C)$. In particular, we show that
$\operatorname{irr}(S)\ge\min\{\chi(\mathcal{O}_S)+1,\,2\operatorname{gon}(C)\}$ if $h^0(C,\mathcal L)\le1$.

We next investigate what happens when $\chi(\mathcal O_S)$ is too small for the preceding lower bound to determine the degree of irrationality. Let $\mathcal F_{a,b}$ denote the moduli space of elliptic surfaces with a section satisfying
\[g(C)=a,\qquad \chi(\mathcal O_S)=b,\qquad \kappa(S)=1.\]
For $a\geq2$, we prove that a very general member of $\mathcal F_{a,b}$ satisfies
$\operatorname{irr}(S)\geq4$ in Theorem~\ref{degree4}. Degree two maps are excluded by studying involutions preserving the Iitaka fibration. Degree three maps are treated using the theory of triple planes and Pompilj’s description of irrational pencils, as recalled in \cite{CM25}. In the composed pencil case, a triple plane model forces the $j$-map of the elliptic fibration to factor through a degree three map $C\to\mathbb P^1$, a condition that does not hold for a very general Weierstrass fibration.
This generic result is complemented by special families of lower degree. For every $a\geq1$, we construct surfaces in both $\mathcal F_{a,1}$ and $\mathcal F_{a,2}$ with
$\operatorname{irr}(S)=2.$ Consequently, the degree of irrationality jumps from $2$ on special loci to at least $4$ at a very general point of these moduli spaces.

Section 4 of the paper concerns properly elliptic surfaces with
$\chi(\mathcal O_S)=0$. These surfaces have been classified in \cite{CFGLS}. If the base curve $C$ is general of genus at least three, we prove that
$\operatorname{irr}(S)\geq4$ in Theorem~\ref{chi0}. More generally, the same conclusion holds whenever
$\operatorname{gon}(C)\geq4$, without a generality assumption on $C$. The proof again excludes double and triple planes. In the triple plane case, the absence of singular fibers after the relevant base change imposes strong ramification conditions on a degree three map $C\to\mathbb P^1$. A Hurwitz space dimension count then shows that such curves form a proper sublocus of the moduli space of curves.
The genus two case exhibits different behavior. We construct elliptic surfaces with
\[
g(C)=2,\qquad
\chi(\mathcal O_S)=0,\qquad
\operatorname{irr}(S)=2
\]by making a double base change of a rational isotrivial elliptic surface with two fibers of type $I_0^{*}$. By contrast, for every smooth hyperelliptic curve $C$ of genus at least two and every elliptic curve $E$, we prove
$\operatorname{irr}(C\times E)=4$ in Theorem~\ref{product}. Chen and Martin \cite{CM23} prove $\operatorname{irr}(C_1\times C_2)=4$ when both factors are hyperelliptic of genus at least two, but their statement does not include an elliptic factor. Thus Theorem~\ref{product} gives an interesting boundary case. We also remark that $\irr(E_1 \times E_2) = 3$ where both $E_j$ are elliptic curves \cite{Lee}.

The Castelnuovo–Severi inequality helps to exclude the case when $g(C)\geq 3$. The exclusion of degree three uses Pompilj’s theorem together with the monodromy classification of rational elliptic surfaces. The only possible configurations are
$(IV,IV,IV)$ and $(IV,IV^{*})$, and their cyclic monodromy covers have genus one and zero, respectively. Neither can be dominated by a degree three cover from a curve of genus two.

We also consider Dolgachev surfaces. For a fixed coprime pair $p,q>1$, a very general Dolgachev surface $S(p,q)$ has no birational involution, and hence
$\operatorname{irr}(S(p,q))\geq3$. On the other hand, torsion sections on suitable rational Jacobian elliptic surfaces yield special examples with
$\operatorname{irr}(S(2,q))=2$, and, when $3\nmid q$, $\operatorname{irr}(S(3,q))=3.$ The latter maps are cyclic triple plane models arising from translation by a global section of order three.

\medskip

The paper is organized as follows. In Section 2 we establish the general upper and lower bounds and give examples showing their sharpness. Section 3 studies the low $\chi(\mathcal O_S)$ case, and very general Weierstrass fibrations.
Section 4 treats properly elliptic surfaces with $\chi(\mathcal O_S)=0$, including the computation $\operatorname{irr}(C\times E)=4$ for every smooth hyperelliptic curve $C$ with $g(C)\geq 2$ and $E$ is an elliptic curve. Finally, Section 5 treats Dolgachev surfaces.

\medskip

\noindent {\bf Acknowledgements.} Y. Lee would like to thank the Department of Mathematics at National University of Singapore for its hospitality during his visit, and is supported by the Institute for Basic Science (IBS-R032-D1). D. -Q. Zhang is supported by NUS ARF (A-8002487-00-00).

\section{Lower bound for the degree of irrationality}

Let $S$ be a smooth projective surface and $f:S\longrightarrow C$ a relatively minimal elliptic fibration over a smooth projective curve $C$. Write the multiple fibers as $f^*(p_i)=m_iF_i$, where $F_i$ has multiplicity one as a divisor.
Define the {\it fundamental line bundle} by $\mathcal L:=(R^1f_*\mathcal O_S)^\vee$. Then $\deg\mathcal L=\chi(\mathcal O_S)$. 

The canonical bundle formula of $S$ is
\[K_S
\simeq
f^*(\omega_C\otimes\mathcal L)
\otimes
\mathcal O_S\left(\sum_i(m_i-1)F_i\right).\]
As a \(\mathbb Q\)-divisor,
$K_S\sim_{\mathbb Q}
f^*\left(
K_C+\mathcal L+
\sum_i\left(1-\frac1{m_i}\right)p_i
\right).$

We use the following result frequently.

\begin{Lem}\label{lem:q=g}
Assume $(\chi(\mathcal{O}_S) =$) $\deg\mathcal L>0$. Then $q(S)=g(C)$, and $p_g(S)=g(C)-1+\chi(\mathcal O_S).$
\end{Lem}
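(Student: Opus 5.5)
The plan is to compute $q(S)$ via the Leray spectral sequence for $f$ and then read off $p_g$ from $\chi$. Since $f$ is relatively minimal with connected fibers, $f_*\mathcal O_S=\mathcal O_C$. The low-degree exact sequence of the Leray spectral sequence gives
\[
0\to H^1(C,\mathcal O_C)\to H^1(S,\mathcal O_S)\to H^0(C,R^1f_*\mathcal O_S)\to H^2(C,\mathcal O_C)=0 .
\]
Hence
\[
q(S)=g(C)+h^0(C,R^1f_*\mathcal O_S).
\]

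The next step is to identify $R^1f_*\mathcal O_S$. By the standard theory of elliptic fibrations (Kodaira, or Barth--Hulek--Peters--Van de Ven, Ch.~V), one has $R^1f_*\mathcal O_S=T\oplus\mathcal L^{-1}$. Here the torsion part $T$ is supported at the \emph{wild} fibres. Over $\mathbb C$ there are no wild fibres, so $T=0$; this is implicit in the paper's definition $\mathcal L=(R^1f_*\mathcal O_S)^\vee$ as a line bundle. Thus $R^1f_*\mathcal O_S\simeq\mathcal L^{-1}$. Since $\deg\mathcal L^{-1}=-\chi(\mathcal O_S)<0$ by hypothesis, $h^0(C,\mathcal L^{-1})=0$, and therefore $q(S)=g(C)$.

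Finally, $\chi(\mathcal O_S)=1-q(S)+p_g(S)$, so
\[
p_g(S)=\chi(\mathcal O_S)-1+q(S)=g(C)-1+\chi(\mathcal O_S).
\]
As a cross-check, one can instead compute $p_g$ from the canonical bundle formula. We have $f_*\mathcal O_S(\sum(m_i-1)F_i)=\mathcal O_C$, so
\[
p_g=h^0(C,\omega_C\otimes\mathcal L).
\]
Since $\deg\mathcal L>0$, Serre duality gives $h^1(C,\omega_C\otimes\mathcal L)=h^0(C,\mathcal L^{-1})=0$. Riemann--Roch then yields $2g-2+\chi-g+1=g-1+\chi$, in agreement with the first computation.

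The only nontrivial input is the structure statement $R^1f_*\mathcal O_S\simeq\mathcal L^{-1}$, with $\mathcal L$ locally free, together with $\deg\mathcal L=\chi(\mathcal O_S)$. The latter is stated in the paper before the lemma and can be assumed. The absence of torsion in $R^1f_*\mathcal O_S$ is where characteristic zero is used, and I will simply cite it.
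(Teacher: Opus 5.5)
Your proof is correct and matches the paper's. The step $q(S)=g(C)$ is the same Leray computation, using $R^1f_*\mathcal O_S\simeq\mathcal L^{-1}$ of negative degree, and your cross-check $p_g=h^0(C,\omega_C\otimes\mathcal L)$ via the canonical bundle formula and Riemann--Roch is exactly how the paper gets $p_g$; your main route, reading $p_g$ off from $\chi(\mathcal O_S)=1-q(S)+p_g(S)$, is a valid shortcut that avoids the canonical bundle formula.
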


\begin{proof}
By the spectral sequence and the vanishing of degree two cohomologies for coherent sheaf on $C$, we have $h^1(S, \mathcal{O}_S) = h^1(C, \mathcal{O}_C) + h^0(C, R^1f_*{\mathcal O}_S) = g(C)$, where we used  $\deg\mathcal L > 0$ for the last equality.

Note that $\sum_i(m_i-1)F_i$ is in the fixed part of $|K_S|$, so
\[p_g(S)
=h^0(C,\omega_C\otimes\mathcal L)
=g(C)-1+\deg\mathcal L = g(C)-1+\chi(\mathcal O_S).\]
\end{proof}

If $S$ has no multiple fibers then the canonical bundle formula is $K_S\simeq f^*(\omega_C\otimes\mathcal L)$, and
\[
\kappa(S)=
\begin{cases}
-\infty,&2g(C)-2+\chi(\mathcal O_S)<0,\\[2mm]
0, &2g(C)-2+\chi(\mathcal O_S)=0,\\[2mm]
1,&2g(C)-2+\chi(\mathcal O_S)>0.
\end{cases}
\]
The second row above is by the surface theory. In particular, if \(C\) is elliptic and \(f\) has a section, then $K_S=f^*\mathcal L$ and $\deg\mathcal L=\chi(\mathcal O_S)$. 
Hence, assuming further \(\chi(\mathcal O_S)>0\),
\[
\kappa(S)=1,\qquad q(S)=1,\qquad
p_g(S)=\chi(\mathcal O_S).
\]
Thus every canonical form on $S$ is pulled back from a section of $\mathcal L$.

We remark that when $f: S\to C$ is a smooth relatively minimal elliptic surface with a section
and $q(S) = 0$, we have $g(C) \le q(S) = 0$ and hence ${\rm irr}(S) \le 2 \, {\rm gon}(C) = 2$. 

In the following theorem, if $\chi(\mathcal O_S)\le 0$ then there is nothing to prove for the lower bound, so \[2 \, {\rm gon}(C) \ge \irr(S)\ge\min\left\{\chi(\mathcal O_S),\,2\operatorname{gon}(C)\right\}\] is still true. 

\begin{Thm}\label{lower bound}
Let $f:S\to C$ be a smooth projective relatively minimal elliptic surface with a section. Assume $\chi(\mathcal O_S)>0$.
Then $g(C) = q(S)$ and \[2 \, {\rm gon}(C) \ge \irr(S)\ge\min\left\{\chi(\mathcal O_S),\,2\operatorname{gon}(C)\right\}.\]
\end{Thm}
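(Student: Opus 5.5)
The equality $g(C)=q(S)$ is Lemma~\ref{lem:q=g}, since $\deg\mathcal L=\chi(\mathcal O_S)>0$. For the upper bound, let $\sigma$ be the section. The fibrewise elliptic involution $x\mapsto -x$ with respect to $\sigma$ is a biregular involution of $S$ over $C$. The quotient $S/\iota$ is birational to a $\mathbb P^1$-bundle over $C$, hence to $C\times\mathbb P^1$. Composing with $\phi\times\mathrm{id}$, where $\phi:C\to\mathbb P^1$ has degree $\operatorname{gon}(C)$, gives a dominant map $S\dashrightarrow\mathbb P^1\times\mathbb P^1\sim_{\mathrm{bir}}\mathbb P^2$ of degree $2\operatorname{gon}(C)$.

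For the lower bound, take a dominant rational map $\varphi:S\dashrightarrow\mathbb P^2$ of degree $d=\operatorname{irr}(S)$, and assume $d<2\operatorname{gon}(C)$. We will show $d\ge\chi(\mathcal O_S)$. Let $Z=\{x_1,\dots,x_d\}$ be a general fibre of $\varphi$. By \cite[Proposition 4.2]{Ba12} (see also \cite{BDELU}), $Z$ satisfies the Cayley--Bacharach property with respect to $|K_S|$: every canonical form vanishing at $d-1$ of the points vanishes at the last one. Since $f$ has a section, no fibre is multiple. By Lemma~\ref{lem:q=g} and the canonical bundle formula, $H^0(K_S)=f^*H^0(C,\omega_C\otimes\mathcal L)$, with $\deg(\omega_C\otimes\mathcal L)=2g-2+\chi(\mathcal O_S)$. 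Put $p_i=f(x_i)$. We now split into two cases.

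\emph{Case 1: the points $p_1,\dots,p_d$ are pairwise distinct.} If $d\le\chi(\mathcal O_S)-1$, then by Riemann--Roch the line bundle $M=\omega_C\otimes\mathcal L(-p_2-\dots-p_d)$ has degree $2g-1+\chi-d\ge 2g$. It is therefore base point free and nonspecial, so $h^0(M(-p_1))<h^0(M)$. This gives a section of $\omega_C\otimes\mathcal L$ vanishing at $p_2,\dots,p_d$ but not at $p_1$. Its pullback is a canonical form separating $x_1$ from the other points, which contradicts Cayley--Bacharach. Hence $d\ge\chi(\mathcal O_S)$.

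\emph{Case 2: two points of a general fibre lie in the same fibre of $f$.} This is the main obstacle. The natural tool is the incidence correspondence. Consider $\Gamma\subset C\times\mathbb P^2$, the closure of the image of $(f,\varphi)$; its degree over $\mathbb P^2$ is some $e<d$. Either $e=1$, or $e$ is a proper divisor of $d$ in the Galois-like sense. I would try to reduce to the following dichotomy.

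\emph{Subcase $e=1$.} Then $\varphi$ factors birationally through a map $\Gamma\to\mathbb P^2$, and $\Gamma$ is birational to $S$ modulo the identification of fibre points. A cleaner formulation is this: the general fibre $Z$ decomposes into groups $Z\cap f^{-1}(p_j)$ of equal size $r\ge2$, giving $d=re$ with $e$ distinct base points. Restricting $\varphi$ to a general fibre $E$ of $f$ gives a map $E\to\mathbb P^2$ whose image is a curve. Composing with a general pencil shows that $C$ maps to $\mathbb P^1$ with degree related to $e$. More precisely, the curves $\varphi(f^{-1}(p))$ form a one-parameter family covering $\mathbb P^2$, and a general point of $\mathbb P^2$ lies on exactly $e$ of them. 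Choosing a general line $\ell\subset\mathbb P^2$ and a point $y\in\ell$ yields a correspondence showing $\operatorname{gon}(C)\le e$. Since $E$ is elliptic, $E\to\varphi(E)$ has degree $r\ge2$ unless $\varphi(E)$ is a line, and so $d=re\ge2\operatorname{gon}(C)$, a contradiction. The delicate point is to make the bound $\operatorname{gon}(C)\le e$ rigorous. I would use the family of image curves: fix a general point $y_0$, and note that the map $C\to$ (pencil of curves through $\cdots$) needs care. If that fails, I would instead apply Cayley--Bacharach to the distinct points $p_j$ with multiplicity: sections of $\omega_C\otimes\mathcal L$ vanishing on all but one group already violate Cayley--Bacharach when $e\le\chi(\mathcal O_S)-1$. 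This yields $d\ge e\ge\chi(\mathcal O_S)$ or forces the gonality bound. Either way, $d\ge\min\{\chi(\mathcal O_S),2\operatorname{gon}(C)\}$.
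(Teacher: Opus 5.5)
\textbf{Verdict: there is a genuine gap in Case~2.} Your upper bound (fibrewise inversion, then $\phi\times\mathrm{id}$) is the standard Yoshihara argument that the paper cites. Your Case~1 is also correct: it is the paper's case $r=1$, with your Riemann--Roch/base-point-freeness step replacing the vanishing $H^1(C,M(-Z))\cong H^0(C,\mathcal L^{-1}(Z))^\vee=0$.

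\textbf{What goes wrong in Case~2.} You never prove $\operatorname{gon}(C)\le e$; you say yourself that it ``needs care''. The fallback you offer instead is false. Cayley--Bacharach only constrains canonical forms vanishing at all but \emph{one} point of $Z$. Suppose $s\in H^0(C,\omega_C\otimes\mathcal L)$ vanishes at $p_2,\dots,p_e$ but not at $p_1$. Then $f^*s$ is nonzero at all $r\ge 2$ points of $Z\cap f^{-1}(p_1)$, which is perfectly compatible with Cayley--Bacharach. In fact your fallback would rule out Yoshihara's map itself. That map has $r=2$ and $e=\operatorname{gon}(C)$. For $C$ elliptic and $\chi(\mathcal O_S)\ge 3$ this gives $e=2\le\chi(\mathcal O_S)-1$, so you would obtain a ``contradiction'' for a map that exists. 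The sentence about $E\to\varphi(E)$ adds nothing either: that degree is $r$, and $r\ge2$ is simply your Case~2 hypothesis, so the ellipticity of $E$ plays no role.

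\textbf{The missing step.} It is short once you work on a curve rather than on $\mathbb P^2$; this is how the paper closes the case.
\begin{enumerate}
\item Take a general line $\ell$ and let $D$ be the normalization of $\varphi^{-1}(\ell)$. You get $u: D\to\ell$ of degree $d$ and $v=f|_D: D\to C$.
\item The map $v$ is surjective: for a general fibre $F$ of $f$, $\varphi(F)$ is a curve, so it meets $\ell$.
\item Let $\Gamma$ be the normalization of the image of $(u,v)$ in $\ell\times C$. Then $D\to\Gamma$ has degree $r$, $\Gamma\to\ell$ has degree $e$, $d=re$, and $\Gamma$ dominates $C$.
\item Gonality does not drop under a surjective morphism of curves. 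For example, if $h$ is a function of degree $e$ on $\Gamma$, then for general $c\in\mathbb C$ the norm $N_{\Gamma/C}(h-c)$ is a nonconstant function on $C$ of degree at most $e$. Hence $\operatorname{gon}(C)\le e$.
\item Therefore $d=re\ge 2\operatorname{gon}(C)$ whenever $r\ge 2$.
\end{enumerate}
Your phrase about ``a correspondence from a general line'' points in this direction. Without the reduction to the monotonicity of gonality, however, Case~2 is not proved.
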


\begin{proof}
The first assertion is from Lemma \ref{lem:q=g}, while the first inequality was proved by Yoshihara \cite[Proposition 1]{Yo96}. 

Next we prove the second inequality. The canonical bundle formula gives $K_S=f^*M$ where $M: =\omega_C\otimes\mathcal L$.
If $Z\subset C$ is a zero-dimensional subscheme of length $d\le \chi(\mathcal O_S)-1$,
then 
\[H^1(C,M(-Z))
 \cong H^0(C,\mathcal L^{-1}(Z))^\vee.
\]
Since $\deg\mathcal L=\chi(\mathcal O_S)$,
\[
\deg\bigl(\mathcal L^{-1}(Z)\bigr)
=d-\deg\mathcal L\le-1.
\]
Therefore $H^1(C,M(-Z))=0$. Thus $M$ separates every subscheme of length at most $\chi(\mathcal O_S)-1$.

Suppose there is a map
\[
\phi:S\dashrightarrow\mathbb P^2
\]
which has degree $d\le \chi(\mathcal O_S)-1$. 

After resolving $\phi$, choose a general line $\ell\subset\mathbb P^2$ and let $D$ be the normalization of $\phi^{-1}(\ell)$. 
The curve $D$ is irreducible for a general $\ell$, and we have maps
\[
u:D\to\ell\simeq\mathbb P^1,\qquad \deg u=d, \qquad \text{and} \qquad v=f|_D:D\to C.
\]
Let $\Gamma$ be the normalization of the image of $(u,v):D\longrightarrow\mathbb P^1\times C$.
Write $D\xrightarrow{\,r:1\,}\Gamma\xrightarrow{\,e:1\,}\mathbb P^1$. Then $d=re$.

If $r=1$, the $d$ points $\{x_1,\ldots,x_d\}=u^{-1}(t)$ over a general $t\in\mathbb P^1$ have pairwise distinct images $b_i=f(x_i)\in C$. 
Because $d\le \chi(\mathcal O_S)-1$, the line bundle $M$ separates these \(d\) points. Hence there is
$s\in H^0(C,M)$ vanishing at $b_1,\ldots,b_{d-1}$ but not at $b_d$. Then $f^*s\in H^0(S,K_S)$ vanishes at all but one point of the general fiber of $\phi$, since $K_S=f^*M$.

This contradicts the Cayley–Bacharach property for fibers of a rational map to $\mathbb P^2$: A general fiber consists of distinct points $Z=\{x_1,\ldots,x_d\}$. The trace theorem says that $Z$ satisfies Cayley–Bacharach with respect to $|K_S|$: if a canonical form vanishes at all but one point of $Z$, it must also vanish at the remaining point. This is the null trace result used in studying degree of irrationality (cf. \cite[Proposition 4.2]{Ba12} or \cite[Section 1]{BDELU}).

Therefore, we may assume $r\ge2.$ On the other hand, $\Gamma\to C$ is surjective. Gonality cannot decrease under a surjective map of curves, so $e\ge\operatorname{gon}(C)$.
Hence $d=re\ge2\operatorname{gon}(C)$. This proves
$d\ge\min\{\chi(\mathcal O_S),2\operatorname{gon}(C)\}$.
\end{proof}

\begin{Cor}
Let $S$ be an elliptic surface with a section and $q(S)=1$. Assume $\chi(\mathcal{O}_S) \ge 4$.
Then $\irr(S)=4$.
\end{Cor}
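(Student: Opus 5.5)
Let $f:S\to C$ be the elliptic fibration with a section; since the corollary speaks of an elliptic surface with a section, $f$ is relatively minimal and $\chi(\mathcal O_S)\ge 4>0$. We apply Theorem~\ref{lower bound} directly, and the only real work is to identify the base curve.

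First, since $\chi(\mathcal O_S)>0$, Lemma~\ref{lem:q=g} (equivalently the first assertion of Theorem~\ref{lower bound}) gives $g(C)=q(S)=1$. So $C$ is an elliptic curve. An elliptic curve is not rational, so $\operatorname{gon}(C)\ge 2$. The complete linear system $|2p|$ for any point $p\in C$ is base-point free of degree two and gives a degree two map $C\to\mathbb P^1$. Hence $\operatorname{gon}(C)=2$ and $2\operatorname{gon}(C)=4$.

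Next, Theorem~\ref{lower bound} gives
\[
4=2\operatorname{gon}(C)\ \ge\ \operatorname{irr}(S)\ \ge\ \min\{\chi(\mathcal O_S),\,4\}.
\]
By the hypothesis $\chi(\mathcal O_S)\ge 4$, the minimum equals $4$, so $\operatorname{irr}(S)=4$.

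For completeness, $\kappa(S)=1$: there are no multiple fibers because of the section, and $K_S=f^*\mathcal L$ with $\deg\mathcal L=\chi(\mathcal O_S)>0$, as noted before the theorem. This is not needed for the inequalities, which hold under the theorem's hypotheses alone. There is no real obstacle; the only step needing care is checking that $g(C)=1$ comes from $q(S)=1$ via the positivity of $\chi(\mathcal O_S)$, which is exactly what Lemma~\ref{lem:q=g} provides.
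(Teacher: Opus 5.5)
Your proof is correct and matches the paper's argument; the paper gives no separate proof and treats the corollary as immediate, as its introduction explains. Since $\chi(\mathcal O_S)>0$, Lemma~\ref{lem:q=g} gives $g(C)=q(S)=1$, so $\operatorname{gon}(C)=2$, and Theorem~\ref{lower bound} then gives $4\ge\irr(S)\ge\min\{\chi(\mathcal O_S),4\}=4$. The relative minimality of $f$ that you assume is harmless, because $q$, $\chi(\mathcal O_S)$, $\irr$ and the existence of a section are all preserved when passing to the relatively minimal model.
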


\begin{Rmk}
For any $k \ge 1$, we can construct Kodaira dimension 1 elliptic surfaces $S_k$ with a section satisfying $q(S_k)=1$ and $p_g(S_k)=k$ by using the Weierstrass equation. This construction is well-known to experts.

Indeed, let $C$ be any smooth elliptic curve and choose a line bundle $\mathcal L\in\operatorname{Pic}^k(C)$, with $\deg\mathcal L=k$.
Choose general sections
\[
a_4\in H^0(C,\mathcal L^{\otimes4}),\qquad
a_6\in H^0(C,\mathcal L^{\otimes6}).
\]Consider the Weierstrass equation
$S_k: y^2z=x^3+a_4xz^2+a_6z^3$ inside the $\mathbb P^2$-bundle
\[
\mathbb P_C\!\left(
\mathcal O_C\oplus\mathcal L^{\otimes2}
\oplus\mathcal L^{\otimes3}
\right).
\]
Choose \(a_4,a_6\) generally enough that $\Delta=4a_4^3+27a_6^2
\in H^0(C,\mathcal L^{\otimes12})$ has only simple zeros. Then the total space $S_k$ is smooth and its singular fibers are $12k$ nodal fibers of type $I_1$. The point $[0:1:0]$ gives the zero section.
For this Weierstrass fibration,
\[
R^1f_*\mathcal O_{S_k}\simeq\mathcal L^{-1}.
\]
Because $\deg\mathcal L^{-1}=-k<0$, $H^0(C,\mathcal L^{-1})=0$. The Leray spectral sequence gives
$q(S_k)=h^1(C,\mathcal O_C)+h^0(C,\mathcal L^{-1})=1$. Moreover, $\chi(\mathcal O_{S_k})=\deg\mathcal L=k.$ Hence we obtain $p_g(S_k)=\chi(\mathcal O_{S_k})+q(S_k)-1=k$.
Finally, $mK_{S_k}=f^*(\mathcal L^{\otimes m})$ and
$h^0(S_k,\mathcal O_S(mK_{S_k}))
=h^0(C,\mathcal L^{\otimes m})
=mk.$
Thus the plurigenera grow linearly in \(m\), so $\kappa(S_k)=1$.
\end{Rmk}

\begin{Prop}\label{irr2}
Let $f:S \to C$ be a smooth projective, relatively minimal, elliptic surface with a section, and $q(S)\ge 1$.
If $\irr(S)=2$ then $\chi(\mathcal O_S)\le 2$.
\end{Prop}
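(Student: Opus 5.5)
The plan is to argue by contradiction using the lower bound of Theorem~\ref{lower bound}: if $\chi(\mathcal O_S)\ge 3$, we will show that this lower bound forces $\irr(S)\ge 3$. Only the regime $\chi(\mathcal O_S)>0$ needs work. If $\chi(\mathcal O_S)\le 0$, the inequality $\chi(\mathcal O_S)\le 2$ holds trivially.

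So suppose $\irr(S)=2$ and, for contradiction, $\chi(\mathcal O_S)\ge 3$.

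\emph{Step 1: the base is not rational.} Since $\chi(\mathcal O_S)>0$, Lemma~\ref{lem:q=g} (equivalently, the first assertion of Theorem~\ref{lower bound}) gives $g(C)=q(S)\ge 1$. Hence $C$ is not rational, so $\operatorname{gon}(C)\ge 2$ and $2\operatorname{gon}(C)\ge 4$.

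\emph{Step 2: apply the lower bound.} Since $f$ has a section and $\chi(\mathcal O_S)>0$, Theorem~\ref{lower bound} applies and gives
\[
\irr(S)\ \ge\ \min\{\chi(\mathcal O_S),\,2\operatorname{gon}(C)\}\ \ge\ \min\{3,4\}\ =\ 3.
\]
This contradicts $\irr(S)=2$. Therefore $\chi(\mathcal O_S)\le 2$.

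There is no serious obstacle here. The only point needing care is the order of the steps: we must first be in the range $\chi(\mathcal O_S)>0$, so that Lemma~\ref{lem:q=g} converts the hypothesis $q(S)\ge1$ into $g(C)\ge 1$. Without this, the hypothesis on $q(S)$ would say nothing about the gonality of $C$, since in general one only has $g(C)\le q(S)$.

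Note that the contradiction uses only $\chi(\mathcal O_S)\ge 3$ together with $2\operatorname{gon}(C)\ge 3$. So the same argument shows more generally that $\irr(S)\ge 3$ whenever $\chi(\mathcal O_S)\ge 3$ and $q(S)\ge 1$.
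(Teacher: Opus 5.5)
Your proposal is correct and follows essentially the same route as the paper: assume $\chi(\mathcal O_S)\ge 3$, use $g(C)=q(S)\ge 1$ to get $\operatorname{gon}(C)\ge 2$, and apply Theorem~\ref{lower bound} to obtain $\irr(S)\ge 3$. Your explicit remark that $\chi(\mathcal O_S)>0$ is what makes $q(S)=g(C)$ available is a useful clarification that the paper leaves implicit.
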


\begin{proof}
Suppose the contrary that $\chi(\mathcal{O}_S) \ge 3$.
Then $g(C) = q(S) \ge 1$, and hence $\rm{gon}(C) \ge 2$. Thus $\rm{irr}(S) \ge 3$ by Theorem \ref{lower bound}, absurd.
\end{proof}



\begin{Rmk}\label{bound}
The lower bound argument in Theorem~\ref{lower bound} still works when $S\to C$ has multiple fibers. Indeed, assume $\chi(\mathcal O_S)>0$, and let
$f: S\to C$ be relatively minimal with multiple fibers $f^*(p_i)=m_iF_i$. Set again
\[
\mathcal L=(R^1f_*\mathcal O_S)^\vee,\qquad
\deg\mathcal L=\chi(\mathcal O_S),
\qquad \text{and} \qquad M=K_C\otimes\mathcal L.
\]
The canonical bundle formula is $K_S\simeq f^*M\otimes
\mathcal O_S\left(\sum_i(m_i-1)F_i\right)$. Write $V=\sum_i(m_i-1)F_i.$ There is a natural inclusion
\[
H^0(C,M)\hookrightarrow H^0(S,K_S)
\]
(with equal dimension of the domain and co-domain of this map) given by $s\longmapsto \sigma_V\, f^*s$, where $\sigma_V$ is the canonical section of $\mathcal O_S(V)$.
Away from $V$,
\[
(\sigma_Vf^*s)(x)=0
\quad\Longleftrightarrow\quad
s(f(x))=0.
\]
A general fiber of a generically finite rational map $\varphi:S\dashrightarrow\mathbb P^2$ is disjoint from the fixed divisor $V$. Therefore these canonical forms can be used in exactly the same Cayley–Bacharach argument.
Now repeat the proof of Theorem~\ref{lower bound}. If
\[
\deg\varphi=d\le\chi(\mathcal O_S)-1
\]
and the images on $C$ of the $d$ points in a general fiber are distinct, a section of $M$ separates one from the others. Multiplying its pullback by $\sigma_V$ produces a canonical form on $S$ 
vanishing at all but one point, contradicting Cayley–Bacharach.

Therefore, combined with the result in \cite{BKL76},  we have 
\[2\alpha^2\, {\rm gon}(C) \ge \irr(S)\ge\min\left\{\chi(\mathcal O_S),\,2\operatorname{gon}(C)\right\}\]
where $\alpha$ is the index of an elliptic fibration $S \to C$.
\end{Rmk}

\begin{Rmk}\label{sharp}
The bound in Theorem~\ref{lower bound} is sharp. Indeed, for every $k\ge2$, we can construct an elliptic surface $f: S \to C$ such that
\[
\chi(\mathcal O_S)=2k-1,\qquad
\operatorname{gon}(C)=k,\qquad
\operatorname{irr}(S)=2k-1<2k.
\]

Precisely, set $d=2k-1.$ Choose a general curve \(C\) of genus $g(C)=2k-3.$
Then
\[
\operatorname{gon}(C)=
\left\lfloor\frac{g(C)+3}{2}\right\rfloor=k.
\]
Choose a base point free pencil of degree $d=2k-1$: $h:C\longrightarrow\mathbb P^1.$

Let $\pi:X\to\mathbb P^1$ be a rational Jacobian elliptic surface with only nodal $I_1$-fibers. Choose $h$ so that its branch values avoid the singular fibers of $X$, and form
\[
S=X\times_{\mathbb P^1}C.
\]
Then by standard base-change properties of elliptic surfaces, $S$ is smooth and has an elliptic fibration $f: S\to C$ with a section. Its fundamental line bundle is
$\mathcal L=h^*\mathcal O_{\mathbb P^1}(1)$, so
$\deg\mathcal L=\deg h=2k-1.$ Consequently, $\chi(\mathcal O_S)=2k-1$. Moreover, $q(S)=g(C)=2k-3$ and $p_g(S)=q(S)-1+\chi(\mathcal O_S)=4k-5$.
The canonical bundle formula gives $K_S=f^*(K_C\otimes\mathcal L)$, and
\[
\deg(K_C\otimes\mathcal L)
=2(2k-3)-2+(2k-1)=6k-9>0,
\]
so $\kappa(S)=1$.
The natural projection $S\to X$ has degree $2k-1$. Since $X$ is rational, $\irr(S)\le2k-1$.
On the other hand, the lower bound proved above gives
\[
\irr(S) \ge \min\left\{\chi(\mathcal O_S), 2\operatorname{gon}(C)\right\}=
\min\{2k-1,2k\}=2k-1.
\]
Therefore $\irr(S)=2k-1$.
\end{Rmk}

\begin{Ex}
For any integer $m \ge 3$, there are properly elliptic surfaces $S_m$ with $\kappa(S_m) = 1, \, \chi(\mathcal O_{S_m})=m$ and $\irr(S_m)=2$.

Indeed, start with the rational Jacobian elliptic surface
$\pi:J\longrightarrow\mathbb P^1_{[u:v]}$. Assume $\pi$ has a smooth fiber at $0$ and $\infty$. Consider the cyclic cover
\[\nu_m:\mathbb P^1_{[U:V]}\longrightarrow\mathbb P^1_{[u:v]},
\qquad
[U:V]\longmapsto[U^m:V^m].\]
This map has degree $m$ and is totally ramified at $0$ and $\infty$, and unramified elsewhere.
Let $S_m=J\times_{\mathbb P^1,\nu_m}\mathbb P^1$
where $\pi$ induces a relatively minimal Jacobian elliptic fibration $\pi_m: S_m \to \mathbb{P}^1$ with $F$ a genearl fibre.
The fundamental line bundle is
\[\mathcal L_m=\nu_m^*\mathcal O_{\mathbb P^1}(1)
=\mathcal O_{\mathbf P^1}(m).\]
By the ramification divisor formula,
$K_{S_m} \sim \pi_m^*K_{{\mathbb P}^1} + mF \sim (m-2)F$. Therefore $\chi(\mathcal O_{S_m})=\deg\mathcal L_m=m > 0.$ Also $p_g(S)=m-1 \ge 2$ and $q(S)=0$ (cf. Lemma \ref{lem:q=g}). Thus $\kappa(S_m)=1$. Since the base curve is $\mathbb P^1$, $\irr(S_m)\le2$, while $\kappa(S_m)=1$ shows that $S_m$ is not rational. Therefore equality holds.
\end{Ex}

Before finishing this section, we remark the case when $\irr(S)=\chi(\mathcal O_S)<2\operatorname{gon}(C)$.
 
Let $\phi:S\dashrightarrow\mathbb P^2$ have degree $d$. For a general line $\ell\subset\mathbb P^2$, let $D$ be the normalization of $\phi^{-1}(\ell)$. We have
\[
u:D\longrightarrow\ell\simeq\mathbb P^1,
\qquad
v:D\longrightarrow C.
\]Assume that the $d$ points $u^{-1}(t)=\{x_1,\ldots,x_d\}$ have distinct images $b_i=v(x_i)\in C$. Put
\[
B_t=b_1+\cdots+b_d.
\]The relevant obstruction group is $H^1(C,M(-B_t))$. By Serre duality,
$H^1(C,M(-B_t))^\vee
\simeq H^0\!\left(C,\mathcal L^{-1}(B_t)\right).$ If \(d\le\chi(\mathcal{O}_S)-1\), then
\[
\deg\mathcal L^{-1}(B_t)=d-\chi(\mathcal{O}_S)<0,
\]so this group vanishes. Thus $M$ separates the points $b_1,\ldots,b_d$, contradicting Cayley–Bacharach.
But if $d=\chi(\mathcal{O}_S)$, then $\deg\mathcal L^{-1}(B_t)=0$. A degree-zero line bundle has a nonzero section precisely when it is trivial. Therefore
\[
H^1(C,M(-B_t))\ne0
\quad\text{if and only if}\quad
\mathcal O_C(B_t)\simeq\mathcal L.
\]
Thus there are two cases: If $\mathcal O_C(B_t)\not\simeq\mathcal L$, the same argument works and excludes $\deg\phi=\chi$.
The only remaining case is $B_t\in|\mathcal L|$ for general $t$.

Suppose $B_t\in|\mathcal L|.$ Then
\[
M(-B_t)
\simeq K_C\otimes\mathcal L(-B_t)
\simeq K_C.
\]
The divisors $B_t$ vary with $t$. They cannot all be equal because $v: D\to C$ is surjective. Hence
$t\longmapsto B_t$ defines a nonconstant rational curve in $|\mathcal L|$. In particular, $h^0(C,\mathcal L)\ge2$. This leads to the following proposition.

\begin{Prop}
Let \(f:S\to C\) be a relatively minimal Jacobian elliptic surface with \(\chi :=\chi(\mathcal O_S)>0\). If
$h^0(C,\mathcal L)\le1$ for $\mathcal L=(R^1f_*\mathcal O_S)^\vee$, then
\[\operatorname{irr}(S)\ge\min\{\chi+1,\,2\operatorname{gon}(C)\}.\]
\end{Prop}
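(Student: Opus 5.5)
By Theorem~\ref{lower bound} we already have $\operatorname{irr}(S)\ge\min\{\chi,2\operatorname{gon}(C)\}$. So it remains to exclude a dominant map $\phi:S\dashrightarrow\mathbb P^2$ of degree exactly $d=\chi$ in the case $\chi<2\operatorname{gon}(C)$. If $\chi\ge 2\operatorname{gon}(C)$, the minimum is $2\operatorname{gon}(C)$ and the claim follows from the theorem directly.

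So suppose $\deg\phi=d=\chi<2\operatorname{gon}(C)$. Run the same setup as in the proof of Theorem~\ref{lower bound}: take a general line $\ell$, let $D$ be the normalization of $\phi^{-1}(\ell)$ with maps $u:D\to\ell$ and $v:D\to C$, and factor $D\to\Gamma\to\mathbb P^1$ with degrees $r$ and $e$. If $r\ge2$, then $e\ge\operatorname{gon}(C)$ because $\Gamma$ dominates $C$, so $d\ge 2\operatorname{gon}(C)$, a contradiction. Hence $r=1$, and for general $t$ the $d$ points of $u^{-1}(t)$ have distinct images $b_1,\dots,b_d$ on $C$. (The general fiber of $\phi$ over a general point of $\ell$ is exactly $u^{-1}(t)$, since a general point of $\mathbb P^2$ lies on a general line.)

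Now apply the discussion preceding the proposition. Put $B_t=b_1+\dots+b_d$, so that $H^1(C,M(-B_t))^\vee\cong H^0(C,\mathcal L^{-1}(B_t))$, a degree-zero line bundle. I need $M$ to separate $b_d$ from $b_1,\dots,b_{d-1}$, that is, some section of $M$ vanishes on $B_t-b_d$ but not at $b_d$. This holds whenever $h^0(M(-B_t+b_d))>h^0(M(-B_t))$. By Riemann--Roch and Serre duality,
\[
h^0(M(-B_t+b_d))-h^0(M(-B_t))=1-h^0(\mathcal L^{-1}(B_t-b_d))+h^0(\mathcal L^{-1}(B_t)).
\]
Since $\deg\mathcal L^{-1}(B_t-b_d)=-1$, the middle term vanishes, so the difference is at least $1$. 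Thus separation holds unconditionally at this level, and pulling back gives a canonical form vanishing at all points of the general fiber but one. That contradicts the Cayley--Bacharach property used in Theorem~\ref{lower bound}.

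I expect the main obstacle to be checking that this dimension count is really right. The preceding discussion framed the failure case as $\mathcal O_C(B_t)\simeq\mathcal L$, which would force $h^0(\mathcal L)\ge2$ via the nonconstant family $t\mapsto B_t$. If my direct count breaks down somewhere, I will fall back on that route. Under the hypothesis $h^0(\mathcal L)\le1$, the linear system $|\mathcal L|$ has at most one element. Since $v$ is surjective, the divisors $B_t$ vary with $t$ and cannot all equal that single element, so $\mathcal O_C(B_t)\not\simeq\mathcal L$ for general $t$. Then $H^1(C,M(-B_t))=0$, $M$ separates the points $b_i$, and Cayley--Bacharach fails again. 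Either way $\deg\phi=\chi$ is impossible, and $\operatorname{irr}(S)\ge\min\{\chi+1,2\operatorname{gon}(C)\}$.
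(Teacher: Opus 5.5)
Your fallback argument is correct and is exactly the paper's proof. Your main ``direct count'' route fails because of a sign error, and so does the ``either way'' conclusion that relies on it.

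\textbf{The sign error.} Serre duality gives $h^1(C,M(-B))=h^0(C,\mathcal L^{-1}(B))$. Combined with Riemann--Roch this yields
\[
h^0(M(-B_t+b_d))-h^0(M(-B_t))=1+h^0(\mathcal L^{-1}(B_t-b_d))-h^0(\mathcal L^{-1}(B_t)).
\]
You wrote $1-h^0(\mathcal L^{-1}(B_t-b_d))+h^0(\mathcal L^{-1}(B_t))$ instead. Your formula could give $2$, which is already a warning sign: adding one point raises $h^0$ by at most one, so the difference is always $0$ or $1$.

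Since $\deg\mathcal L^{-1}(B_t-b_d)=-1$, the correct difference is $1-h^0(\mathcal L^{-1}(B_t))$. This vanishes exactly when $\mathcal O_C(B_t)\simeq\mathcal L$, so separation is not unconditional when $d=\chi$. If it were, you would get $\irr(S)\ge\min\{\chi+1,2\operatorname{gon}(C)\}$ with no hypothesis on $h^0(C,\mathcal L)$. That contradicts the examples of Remark~\ref{sharp}, where $\irr(S)=\chi=2k-1<2k=2\operatorname{gon}(C)$. There the divisors $B_t$ are precisely fibers of $h$, hence members of $|\mathcal L|=|h^*\mathcal O_{\mathbb P^1}(1)|$, which is the failure case.

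\textbf{What to keep.} Your fallback matches the paper's proof step by step:
\begin{enumerate}
\item Reduce to $r=1$ and $d=\chi$.
\item Observe that Cayley--Bacharach can only survive if $\mathcal O_C(B_t)\simeq\mathcal L$, i.e.\ $B_t\in|\mathcal L|$, for general $t$.
\item Note that the $B_t$ move, because $v$ is surjective, while $|\mathcal L|$ has at most one member when $h^0(C,\mathcal L)\le 1$.
\item Conclude that $H^1(C,M(-B_t))=0$, so $M$ separates the reduced divisor $B_t$, a contradiction.
\end{enumerate}
Delete the direct count and the ``either way'', and present the fallback alone as the proof. It is the only place where the hypothesis $h^0(C,\mathcal L)\le1$ is actually used.
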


\begin{proof}
Suppose
\[
d=\operatorname{irr}(S)<\min\{\chi+1,2\operatorname{gon}(C)\}.
\]
Then $d\le\chi$. If the map $D\to\Gamma$, in the notation of Theorem~\ref{lower bound}, has degree $r\ge2$, then $d\ge2\operatorname{gon}(C)$, a contradiction. Hence $r=1$, and the images of the points in a general fiber are distinct.
For $d\le\chi-1$, the original Cayley–Bacharach argument gives a contradiction. Thus only $d=\chi$ remains. Cayley–Bacharach then forces $B_t\in|\mathcal L|$ for every general $t$. Since $B_t$ varies, this implies $\dim|\mathcal L|\ge1$, contrary to $h^0(C,\mathcal L)\le1$. Therefore no such map exists.
\end{proof}

If $h^0(C,\mathcal L)\ge2$, removing the fixed part of $|\mathcal L|$ gives a pencil on $C$ of degree at most
$\deg\mathcal L=\chi$. Therefore $\operatorname{gon}(C)\le\chi$.
Consequently,
\[
\chi(\mathcal O_S)<\operatorname{gon}(C)
\quad\Longrightarrow\quad
h^0(C,\mathcal L)\le1
\quad\Longrightarrow\quad
\operatorname{irr}(S)\ge\chi(\mathcal O_S)+1.\]

The examples in Remark~\ref{sharp} satisfy
\[
\chi(\mathcal O_S)=2k-1,\qquad
\operatorname{gon}(C)=k,\qquad
\operatorname{irr}(S)=2k-1.
\]Thus $\operatorname{irr}(S)=\chi(\mathcal O_S)
<2\operatorname{gon}(C)$. Therefore the lower bound in Theorem \ref{lower bound} cannot be globally improved from $\chi$ to $\chi+1$.
In those examples, $\mathcal L=h^*\mathcal O_{\mathbb P^1}(1)$ for a degree-$2k-1$ morphism $h:C\longrightarrow\mathbb P^1$.
Hence $|\mathcal L|$ contains a pencil. The divisors $B_t$ belong to this pencil. So we may ask the following.

\medskip

\begin{Question}
Suppose $\chi := \chi(\mathcal{O}_S) <2\operatorname{gon}(C)$ and $S$ admits a degree-$\chi$ rational map $\varphi: S \dasharrow \mathbb P^2$. Then are all degree-$\chi$ rational maps essentially obtained by pulling back a rational elliptic surface along a morphism $C\to\mathbb P^1$ defined by a pencil in $|\mathcal L|$? 

It would be interesting to classify the case where $\deg \varphi = \chi$. A degree-\(\chi\) rational map would force the divisors obtained from its fibers to lie in the fixed linear system $|\mathcal L|$, and produces a rational map
$\mathbb P^2\dashrightarrow |\mathcal L|$. A particularly attractive test case is: $S\in\mathcal F_{1,3}$.
For $\mathcal F_{1,3}$, one has $|\mathcal L|\simeq\mathbb P^2$, so a degree three map would give a self-correspondence between two projective planes. This appears closely related to the exceptional Pompilj model.
\end{Question}

We end this section with the following natural question.

\begin{Question}
Does a very general $S\in\mathcal F_{1,3}$ have $\irr(S)=4$?
\end{Question}

\section{Low holomorphic Euler characteristic case}

Let $\mathcal F_{a,b}$ denote the moduli space of relatively minimal elliptic surfaces $f:S\longrightarrow C$
with a section, $g(C)=a\ge1$, and $\chi(\mathcal O_S)=b\ge1$. Then the underlying moduli space is irreducible. In the range $b>(a-1)/2$, Seiler proved that $\mathcal F_{a,b}$ is irreducible of dimension $10b+2a-2$ \cite[Theorem 7 and Lemma 13]{Sei87}. For the irreducibility statement outside this range, see Catanese \cite[Step V]{Cat07}.

\begin{Thm}\label{degree4}
For a very general member $S\in \mathcal F_{a, b}$, if $a = g(C) \ge 2$ then $\irr(S)\ge 4$.
\end{Thm}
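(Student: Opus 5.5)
We have to exclude $\irr(S)=2$ and $\irr(S)=3$ for a very general $S\in\mathcal F_{a,b}$ with $a=g(C)\ge 2$. Since $S$ is not rational ($\kappa(S)=1$), $\irr(S)\ge 2$. Because $\kappa(S)=1$, the Iitaka fibration $f:S\to C$ is canonical: every birational automorphism of $S$ is biregular on the minimal model and preserves $f$. We can therefore use the structure of $f$ throughout. If $b\ge 4$, Theorem~\ref{lower bound} already gives $\irr(S)\ge\min\{b,2\operatorname{gon}(C)\}\ge 4$, since $\operatorname{gon}(C)\ge 2$. So the real content concerns small $b$, but the argument below is uniform in $b$.

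\textbf{Degree two.} A degree two map $S\dashrightarrow\mathbb P^2$ is the quotient by a birational involution $\iota$. By the remark above, $\iota$ is a biregular involution preserving $f$, so it induces an involution $\bar\iota$ on $C$. For very general $C$ of genus $a\ge 2$, $\operatorname{Aut}(C)$ is trivial when $a\ge 3$, and equals $\{1,\text{hyperelliptic involution}\}$ when $a=2$. We treat two cases.

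If $\bar\iota=1$, then $\iota$ acts fiberwise. On the generic fiber it is either a translation by a $2$-torsion section or $x\mapsto -x+\sigma$. A very general Weierstrass fibration has Mordell--Weil group $0$; this follows from the Shioda--Tate formula, since very general fibers are irreducible $I_1$ and $\rho=2$. So $\iota$ is the elliptic involution, and $S/\iota$ is a conic bundle over $C$, hence birational to $C\times\mathbb P^1$. This is irrational because $g(C)\ge 2$, a contradiction.

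If $\bar\iota$ is the hyperelliptic involution ($a=2$), then $S$ would be isomorphic to its pullback under $\bar\iota$. This forces the $j$-map $C\to\mathbb P^1$ and the discriminant divisor to be $\bar\iota$-invariant. That is a proper closed condition in $\mathcal F_{2,b}$, via a dimension count against $10b+2a-2$ or by varying $a_4,a_6$ freely. So it fails very generally.

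\textbf{Degree three.} Suppose $\phi:S\dashrightarrow\mathbb P^2$ has degree $3$. Following \cite{CM25}, the triple plane is either non-composed with a pencil, in which case the Galois closure and Pompilj's description bound the invariants, or composed with a pencil through $f$. We handle the first case by a Cayley--Bacharach argument as in Theorem~\ref{lower bound}. Take $D$ and $D\to\Gamma\to\mathbb P^1$ as there, with $d=3=re$. Then $r\ge2$ is impossible since $3<2\operatorname{gon}(C)$, and $r=3$ would need $e=1$, which is impossible as $\Gamma\to C$ is surjective. Hence $r=1$, and the three points have distinct images. Since $p_g\ge a-1+b\ge 2$ and $K_S=f^*M$ with $M=\omega_C\otimes\mathcal L$, separation of two of the three points of $C$ fails only when $h^1(M(-b_1-b_2))\neq 0$ for all general fibers, i.e. only in the exceptional situation treated in the remark before the Proposition above. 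We then use Pompilj's theorem to reduce to the case where the map factors compatibly with $f$ through a degree three map $h:C\to\mathbb P^1$, with $S$ birational to a triple cover of a rational elliptic surface (or of a ruled surface) over $\mathbb P^1$. In that composed case the generic fiber of $f$ is defined over $\mathbb C(\mathbb P^1)$ up to the cover, so the $j$-invariant $j:C\to\mathbb P^1$ factors through $h$. For a very general Weierstrass fibration, $a_4,a_6$ are general sections, so $j$ is a general map of degree $12b$ with simple branching. It admits no nontrivial factorization: its monodromy is the full symmetric group, hence primitive. This gives the contradiction.

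\textbf{Main obstacle.} The hardest step is the degree three analysis: justifying via Pompilj's classification of triple planes with irrational pencils that a degree three map must be compatible with $f$ and force $j$ to factor through a trigonal map. Our first attempt will be to show that the fibers of $\phi$ restricted to a general elliptic fiber must consist of points in one fiber or in three distinct fibers, excluding the latter by Cayley--Bacharach with canonical forms. In the former case $\phi$ contracts the pencil $f$ onto a pencil of lines, giving the factorization of $j$.
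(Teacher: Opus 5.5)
\textbf{Main gap: the non-factorization of $j$.} The decisive step of your degree three argument rests on a false claim. You assert that for very general $(a_4,a_6)$ the map $j:C\to\mathbb P^1$ is a general map of degree $12b$ with simple branching, so it has full symmetric monodromy and cannot factor through a trigonal map $\rho$. But for \emph{every} Weierstrass fibration one has $j=1728\cdot 4a_4^3/\Delta$ and $j-1728=-1728\cdot 27a_6^2/\Delta$. Hence the fiber of $j$ over $0$ has ramification profile $(3^{4b})$ and the fiber over $1728$ has profile $(2^{6b})$, so $j$ is never simply branched. Full symmetric (equivalently, primitive) monodromy is exactly the non-factorization you need to prove; generality does not give it to you. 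This matters precisely for $2\le a\le 4$, where every curve carries a $(4-a)$-dimensional family of $g^1_3$'s.

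The paper handles this in two ranges. For $a\ge5$, a very general curve has no $g^1_3$, so the composed case cannot occur at all. For $2\le a\le4$, it uses $\operatorname{div}_0(j)=3D_A$ with $D_A=\operatorname{div}(A)$, $A=a_4\in H^0(C,L^{4})$. A factorization $j=j_0\circ\rho$ then forces $\rho^*Z=3D_A$ for an effective $Z$ of degree $4b$ on $\mathbb P^1$. So $D_A$ is made of full fibers of $\rho$ and total ramification points. Such divisors form a locus of dimension at most $4-a+\lfloor 4b/3\rfloor<4b$, while the pairs $(L,[A])$ form a $4b$-dimensional family.

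\textbf{The reduction to the composed case.} Your Cayley--Bacharach step cannot separate Pompilj's two cases. With $K_S=f^*M$, canonical forms satisfy Cayley--Bacharach on $\{x_1,x_2,x_3\}$ exactly when $\mathcal L^{-1}(B_t)$ is effective and no $b_i$ is a base point of $|\mathcal L^{-1}(B_t)|$. For $b\le2$ one has $\deg\mathcal L^{-1}(B_t)=3-b>0$, so this is not confined to the ``exceptional situation'' of the remark, which concerns only $d=\chi$.

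In fact the composed case is exactly the one where the three points lie on three \emph{distinct} fibers of $f$, namely those over $\rho^{-1}(t)$. So your last paragraph has the cases reversed. All three points on one $f$-fiber is the impossible case, since then $f$ would factor through a dominant map $\mathbb P^2\dashrightarrow C$. The distinct-fibers case is the one that produces the factorization of $j$, and canonical forms do not rule it out.

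What you should do instead is invoke Pompilj directly and state that its exceptional case forces the base of the pencil to have genus one, which is impossible since $a\ge2$. Pompilj's theorem also concerns non-Galois triple planes, so the cyclic case needs its own argument. It is easy: the deck transformation induces $\beta\in\operatorname{Aut}(C)$ with $\beta^3=\mathrm{id}$. Here $\beta\neq\mathrm{id}$, since otherwise the rational quotient dominates $C$, and a very general curve of genus $a\ge2$ has no automorphism of order $3$. The paper instead counts the cyclic trigonal locus, of dimension $a-1<3a-3$.

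Your degree two argument is fine, though the Mordell--Weil input is unnecessary: if $\bar\iota=\mathrm{id}$, then $S/\iota$ dominates $C$ whatever $\iota$ is.
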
 

\begin{proof} 
We need to exclude maps of degrees two and three. A degree two rational map $S\dashrightarrow\mathbb P^2$ gives a birational involution $\iota$ of $S$. Since $\kappa(S)=1$, it preserves the Iitaka fibration and induces
$\bar\iota:C\to C$. If $\bar\iota=\mathrm{id}$, then $S/\iota$ still dominates $C$, so it cannot be rational.

If \(\bar\iota\neq\mathrm{id}\), rationality of \(S/\iota\) forces $C/\bar\iota\simeq\mathbb P^1$. Thus $C$ must be hyperelliptic and $\bar\iota$ its hyperelliptic involution.
For $a\ge3$, a general curve $C$ is not hyperelliptic. When $a=2$, an involution of $S$ covering the hyperelliptic involution $\sigma$ would require $\sigma^*L\simeq L$. Since the fixed locus of $\sigma^*$ on $\operatorname{Pic}^b(C)$ is finite, this does not occur for a general fundamental line bundle $L$. Hence a very general $S\in\mathcal F_{a,b}$ admits no degree two map to $\mathbb P^2$.


Suppose
\[
\varphi:S\dashrightarrow\mathbb P^2
\]
has degree 3. First, we treat the cyclic case. Let $\sigma$ be the deck transformation. Since $f: S\to C$ is the Iitaka fibration, $\sigma$ induces
$\beta\in\operatorname{Aut}(C), f\circ\sigma=\beta\circ f$. If $\beta=\mathrm{id}$, then $S/\langle\sigma\rangle$ still dominates $C$, contradicting its rationality. Hence $\beta$ has order $3$, and rationality gives
$C/\langle\beta\rangle\simeq\mathbf P^1$. Thus $C$ lies in the cyclic trigonal locus. A cyclic degree three cover of $\mathbf P^1$ of genus $a$ has $a+2$ totally ramified branch points, so its Hurwitz locus has dimension
$(a+2)-3=a-1$. For $a\ge2$,
\[a-1<3a-3=\dim\mathcal M_a.\]
Consequently a general curve of genus $a\ge2$ does not admit such an action.

Now, we treat non-cyclic case. Normalize $\mathbb P^2$ in $\mathbb C(S)$; this produces a triple plane birational to $S$, carrying the irrational pencil $S\to C$.
Pompilj’s theorem \cite[Theorem 1]{CM25} gives two possibilities.

(i) The exceptional case forces the base of the irrational pencil to have genus 1. Since $a\ge2$, this case is impossible:

Suppose $g:S\to E$ is a morphism to an elliptic curve. For a nonzero form $\omega_E\in H^0(E,\Omega_E^1)$, we have
$g^*\omega_E=f^*\eta$ for some $\eta\in H^0(C,\Omega_C^1)$, because $H^0(S,\Omega_S^1)=f^*H^0(C,\Omega_C^1)$. Restricting to a general fiber $F$ of $f$, $(g|_F)^*\omega_E=0.$

A non-constant map between elliptic curves pulls a nonzero one form back to a nonzero one form. Hence $g|_F$ is constant. Therefore $g$ factors through $f$: $g=h\circ f$ for some map $h:C\to E$. If $\deg h = 1$ then $f = g$ and $a = g(C) = g(E) = 1$, contradicting the assumption. Thus $\deg h>1$. The general fiber of $g$ is a disjoint union of $\deg h$ fibers of $f$. Thus $g$ does not have connected general fiber; its Stein factorization is still
\[
S\xrightarrow{f}C\xrightarrow{h}E.
\]
So any map $S\to E$ factors through $S\to C$. This is a contradiction, because in Pompilj’s theorem, an irrational pencil is required to have an irreducible general fiber.

(ii) In the composed case there is a degree three morphism $\rho:C\to\mathbb P^1$ and a rational pencil $\tau:\mathbb P^2\dashrightarrow\mathbb P^1$ such that $\tau\circ\varphi=\rho\circ f$.
For a general point $t\in\mathbb P^1$, the three elliptic fibers over $\rho^{-1}(t)=\{c_1,c_2,c_3\}$
map birationally to the same plane curve. They are therefore isomorphic, thus
\[
j_S(c_1)=j_S(c_2)=j_S(c_3).
\]
Consequently, $j_S=j_0\circ\rho$ for some $j_0:\mathbb P^1\to\mathbb P^1$.

If $a\ge5$, a general curve has no $g^1_3$. If $2\le a\le4$, write $D_A=\operatorname{div}(A)$ where $L\in\operatorname{Pic}^b(C),\ 0\ne A\in H^0(C,L^4)$. Since $\operatorname{div}_0(j_S)=3D_A$, such a factorization would imply $\rho^*Z=3D_A$ for an effective divisor $Z$ of degree $4b$ on $\mathbb P^1$. The locus of such $D_A$ has dimension at most
\[
4-a+\left\lfloor\frac{4b}{3}\right\rfloor<4b,
\]
whereas the parameter space of pairs $(L,[A])$ has dimension $4b$. Thus the factorization locus is proper. Therefore a very general member admits no degree three map to $\mathbb P^2$.
\end{proof}

By combining the upper bound in Theorem~\ref{lower bound} and the lower bound in Theorem~\ref{degree4}, we get the following.
\begin{Cor}
We have $\operatorname{irr}(S)=4$ for a very general $S\in\mathcal F_{2,b}$ with $b\geq1$.
\end{Cor}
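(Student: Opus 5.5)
The corollary combines the two bounds already proved, so the plan is to sandwich $\operatorname{irr}(S)$ between them.

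For the upper bound, take any $S\in\mathcal F_{2,b}$. Its base curve $C$ has genus $2$. Every genus-two curve is hyperelliptic, because the canonical system $|K_C|$ is a base-point-free $g^1_2$. Hence $\operatorname{gon}(C)=2$. Since $f:S\to C$ has a section, the first inequality of Theorem~\ref{lower bound} (Yoshihara's bound) applies. Concretely, compose the quotient of the generic fiber by the elliptic involution with the hyperelliptic double cover $C\to\mathbb P^1$. This gives a degree four dominant rational map $S\dashrightarrow\mathbb P^2$, so
\[
\operatorname{irr}(S)\le 2\operatorname{gon}(C)=4 .
\]
This requires only $b\ge1$, which is the standing hypothesis in the definition of $\mathcal F_{a,b}$, so that Theorem~\ref{lower bound} applies with $\chi(\mathcal O_S)>0$.

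For the lower bound, apply Theorem~\ref{degree4} with $a=2$. It gives $\operatorname{irr}(S)\ge4$ for a very general $S\in\mathcal F_{2,b}$, and combining the two inequalities yields $\operatorname{irr}(S)=4$.

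The only point needing care is that the upper bound holds for every member of $\mathcal F_{2,b}$, while the lower bound holds for a very general one. Their conjunction therefore holds on the very general locus, which is exactly what the corollary claims.

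I also check that no exceptional case arises for small $b$. For $b=1$ or $b=2$, the Cayley--Bacharach bound $\min\{\chi,2\operatorname{gon}(C)\}$ is too weak to give $4$. However, Theorem~\ref{degree4} was proved for all $b\ge1$: its degree-two and degree-three exclusions do not use the size of $\chi$. So no extra argument is needed, and there is no real obstacle beyond quoting the two results correctly.
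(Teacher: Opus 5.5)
Your proposal is correct and is exactly the paper's argument: the upper bound $\operatorname{irr}(S)\le 2\operatorname{gon}(C)=4$ from Theorem~\ref{lower bound} (since every genus-two curve is hyperelliptic), combined with the lower bound $\operatorname{irr}(S)\ge 4$ from Theorem~\ref{degree4} with $a=2$. Your added remarks, that the upper bound holds on all of $\mathcal F_{2,b}$ and that Theorem~\ref{degree4} covers every $b\ge1$, are accurate.
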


\begin{Ex}
  Remark~\ref{sharp} shows that there is a properly elliptic surface $S$ with $\chi(\mathcal O_S)=3$, $\operatorname{gon}(C)=2$, and $\irr(S)=3$.
\end{Ex}

\begin{Ex}\label{deg2-chi2}
There are examples with $\irr(S)=2$ with $\chi(\mathcal O_S)=2$ for arbitrary $q(S)\ge 1$.

Indeed, let $\pi:X\to\mathbb P^1$ be a rational elliptic surface with a section, and let $h: C\to\mathbb P^1$ be a hyperelliptic double cover, where $C$ has arbitrary genus $g\ge1$. Choose the branch points of $h$ away from the singular fibers of $X$, and set
\[
S=X\times_{\mathbb P^1}C.
\]
Then $S$ is smooth and has an elliptic fibration $f: S\to C$ with a section. Its fundamental line bundle is
$\mathcal L=h^*\mathcal O_{\mathbb P^1}(1)$, and $\deg\mathcal L=2$.
Consequently, $\chi(\mathcal O_S)=2,$ and because $\deg\mathcal L>0$, $q(S)=g(C)=g.$ Furthermore,
\[
p_g(S)=\chi(\mathcal O_S)+q(S)-1=g+1.
\]
The canonical bundle formula gives
$K_S=f^*(K_C\otimes\mathcal L).$ Since
\[
\deg(K_C\otimes\mathcal L)
=(2g-2)+2=2g>0,
\]we have $\kappa(S)=1$. On the other hand, the natural projection $S\to X$ gives $\irr(S)=2$.
\end{Ex}

\begin{Ex}\label{deg2-chi1}
There are examples with $\irr(S)=2$, $\chi(\mathcal O_S)=1$ and arbitrary $q(S)\ge 1$.

Indeed, let $\pi:X\to\mathbb P^1$ be a rational Jacobian elliptic surface with two fibers of type $I_0^*$. Choose a hyperelliptic double cover
$h:C\to\mathbb P^1$ where $g(C)=g\ge1$, with the following branching:

- $h$ is ramified over one of the $I_0^*$-fibers,

- unramified over the other $I_0^*$-fiber, 

- all its remaining $2g+1$ branch points lie under smooth fibers.

Let \(S\to C\) be the relatively minimal model of the normalization of $X\times_{\mathbb P^1}C$.

Locally at the ramified \(I_0^*\)-fiber, the equation has the form $y^2=x^3+t^2x.$ After the ramified base change $t=u^2$,
$y^2=x^3+u^4x.$ Putting $x=u^2X, y=u^3Y$ and passing to the minimal Weierstrass model gives $Y^2=X^3+X$, which is smooth. Thus the ramified $I_0^*$-fiber disappears after passing to the minimal Weierstrass model. See also Example \ref{ex:to 4.1} for a more geometric reasoning. The other $I_0^*$-fiber is unramified, so it pulls back to two $I_0^*$-fibers. Consequently,
\[
e(S)=2e(I_0^*)=2\cdot6=12,
\]and therefore $\chi(\mathcal O_S)=\frac{e(S)}{12}=1$. Since the fundamental line bundle has positive degree, $q(S)=g(C)=g.$ Hence
\[
p_g(S)=\chi(\mathcal O_S)+q(S)-1=g.
\]
The canonical bundle formula gives
$K_S=f^*(K_C\otimes\mathcal L)$, and $\deg\mathcal L=1$. Therefore
\[
\deg(K_C\otimes\mathcal L)
=2g-2+1=2g-1>0,
\]so $\kappa(S)=1.$ Finally,  a degree two rational map $S\dashrightarrow X$ gives $\irr(S)=2$.
\end{Ex}

\section{Properly elliptic surfaces with $\chi(\mathcal O_S)=0$}

In this section, we treat the degree of irrationality of properly elliptic surfaces with $\chi(\mathcal O_S)=0$. These surfaces are classified in \cite[Lemma 1.1]{CFGLS}.
We focus on the case in which the base curve has genus greater than one.

\begin{Thm}\label{chi0}
Let $f: S\to C$ be a minimal elliptic surface with $\kappa(S)=1$ and $\chi(\mathcal O_S)=0$.
If $g(C)\ge3$ and the base curve $C$ is general in $\mathcal M_{g(C)}$, then $\irr(S)\ge4$. More generally, if $\operatorname{gon}(C)\ge4$, the same conclusion holds without any generality assumption.
\end{Thm}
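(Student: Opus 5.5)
We must exclude rational maps $\varphi:S\dashrightarrow\mathbb P^2$ of degree $2$ and $3$, following the scheme of Theorem~\ref{degree4}. The setting is now $\chi(\mathcal O_S)=0$, so no Cayley--Bacharach argument is available. By \cite[Lemma 1.1]{CFGLS}, $f$ has only multiple fibers with smooth elliptic support and no other singular fibers. Hence the $j$-map is constant, $f$ is isotrivial, and $S$ is a quotient $(\tilde C\times E)/G$ with $G$ acting diagonally, $\tilde C/G=C$ and $E/G$ elliptic (or $\mathbb P^1$ when the action on $E$ is not by translations). Since $\kappa(S)=1$, every birational self-map of $S$ preserves the Iitaka fibration $f$.

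\textbf{Degree two.} A degree two map gives a birational involution $\iota$, which descends to $\bar\iota$ on $C$. If $\bar\iota=\mathrm{id}$, then $S/\iota$ dominates $C$ with $g(C)\ge 3$, so it is not rational. Otherwise rationality of $S/\iota$ forces $C/\bar\iota\simeq\mathbb P^1$, so $C$ is hyperelliptic and $\operatorname{gon}(C)=2$. This excludes both the case where $C$ is general with $g(C)\ge3$ and the case $\operatorname{gon}(C)\ge4$.

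\textbf{Degree three, cyclic case.} The deck transformation induces $\beta\in\operatorname{Aut}(C)$. If $\beta=\mathrm{id}$ we reach the same contradiction as before, so $\beta$ has order three. Then $C\to C/\beta\simeq\mathbb P^1$ is a trigonal map, contradicting $\operatorname{gon}(C)\ge4$. For $C$ general of genus $g\ge3$, the cyclic trigonal locus has dimension $g-1<3g-3$, so this case does not occur.

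\textbf{Degree three, non-cyclic case.} Apply Pompilj's theorem \cite[Theorem 1]{CM25} to the triple plane carrying the irrational pencil $f$. The exceptional case needs an irrational pencil of genus one. As in Theorem~\ref{degree4}, a map $S\to E'$ to an elliptic curve must factor through $f$; here one must check that $H^0(\Omega^1_S)=f^*H^0(\Omega^1_C)$, i.e. $q(S)=g(C)$. If instead $q(S)=g(C)+1$ (the case where $G$ acts on $E$ by translations), one must rule out the extra Albanese direction separately, e.g. via the isotrivial structure. The composed case produces a degree three morphism $\rho:C\to\mathbb P^1$ with $\tau\circ\varphi=\rho\circ f$. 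This immediately contradicts $\operatorname{gon}(C)\ge4$, which proves the second statement.

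\textbf{Main obstacle: general $C$ of genus $3$ or $4$.} Such curves are trigonal, so the composed case must be excluded by a finer argument, and this is the step I expect to be hardest. Here the $j$-invariant gives no information, so I would use the isotriviality instead. Over a general $t\in\mathbb P^1$, the three fibers over $\rho^{-1}(t)$ map birationally onto one plane curve, so they are compatibly isomorphic. This should force the local system (monodromy in $\operatorname{Aut}(E)$) of $f$ to be pulled back, up to the $3$-fold identification, from the rational pencil on $\mathbb P^2$. After the base change making the fibration trivial, one would get a rational isotrivial elliptic surface over $\mathbb P^1$ whose only singular fibers lie under branch points of $\rho$. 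Since $f$ has no singular fibers apart from multiple fibers, $\rho$ must ramify appropriately (totally or with index divisible by the monodromy order) over every branch point of that rational surface. Rational isotrivial configurations need several special fibers, e.g. of type $I_0^*$, $IV$, $IV^*$, and so on. So $\rho$ must have at least two or three points of high prescribed ramification.

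A Hurwitz count then finishes the argument. Trigonal maps with $r$ prescribed total ramification points form a family of dimension at most $(2g+4-r)-3$, counting simple branch points minus $\operatorname{PGL}_2$. For $r\ge 2$ and $g\in\{3,4\}$ this is less than $3g-3$, so these $C$ form a proper sublocus of $\mathcal M_g$, and a general $C$ is excluded. Making the forced-ramification step precise is the delicate point: one must track how multiple fibers and the monodromy group $G$ interact with $\rho$.
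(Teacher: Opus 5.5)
Your proposal follows the paper's proof essentially step for step: an involution forcing $C$ to be hyperelliptic, an order-three automorphism of $C$ in the cyclic case, and Pompilj's dichotomy in the non-cyclic case. For trigonal $C$, it likewise uses total ramification of $\rho$ over at least two singular fibers of the resolved rational pencil $R$, followed by the Hurwitz count $2g(C)+1-r\le 2g(C)-1<3g(C)-3$. The steps you flag as delicate are settled more directly in the paper: $S$ is birational to $C\times_{\mathbb P^1}R$, $e(S)=0$ makes every fiber of $f$ smooth or a multiple of a smooth curve, and $e(R)=12$ gives $R$ at least two singular fibers, each of which would survive on the unramified sheet over a $(2,1)$ point, so no bookkeeping with $G$ or monodromy in $\operatorname{Aut}(E)$ is needed; and your worry about $q(S)=g(C)+1$ (legitimate for the literal argument of Theorem~\ref{degree4}, which uses $q(S)=g(C)$) disappears because Pompilj's exceptional alternative, as formulated in the proof of Theorem~\ref{product}, requires the base of the given pencil $f$ itself to be elliptic, which $g(C)\ge 2$ rules out.
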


\begin{proof}
Suppose $\varphi:S\dashrightarrow \mathbb P^2$ has degree 2. It determines a birational involution $\iota$ of $S$. Since $S$ is minimal with $\kappa(S)=1$, $\iota$ preserves the Iitaka fibration and induces an involution
$\bar\iota:C\longrightarrow C.$ If $\bar\iota=\mathrm{id}$, then the rational quotient $S/\iota$ still dominates $C$, impossible because a rational surface cannot dominate a curve of positive genus.
Consequently $\bar\iota\neq\mathrm{id}$, and
$C/\langle\bar\iota\rangle\simeq \mathbb P^1.$ Thus $C$ must be hyperelliptic. So $C$ cannot be a general element in $\mathcal M_{g(C)}$. Therefore, a general $C$ excludes degree 2.

We now consider the case of degree 3. Suppose $\varphi:S\dashrightarrow \mathbb P^2$ has degree 3. First we suppose the triple plane is noncyclic. Apply Pompilj’s theorem to the irrational pencil $f:S\to C$. 
The exceptional case of Pompilj's theorem forces the pencil base to have genus one, so it is impossible when $g(C)\ge2$ by the same proof of Theorem~\ref{degree4}.
Therefore the elliptic pencil must be composed with the triple plane map, giving a diagram
\[
\begin{array}{ccc}
S & \stackrel{\varphi}{\dashrightarrow} & \mathbb P^2\\
\downarrow f && \downarrow h\\
C&\xrightarrow{\ \varphi_C\ }&\mathbb P^1,
\end{array}
\qquad \deg\varphi_C=3.
\]
This is exactly the composed pencil case of Pompilj’s theorem. 
Resolve the pencil $h$ to obtain a relatively minimal rational surface with genus one fibration $r: R\longrightarrow\mathbb P^1$. We have 
\[S\sim_{\mathrm{bir}} C\times_{\mathbb P^1}R.\]
Now $\chi(\mathcal O_S)=0$ (and $K_S^2 = 0$) imply, by Noether's formula, that the Euler number $e(S) = 0$ and hence every fiber of $f$ is smooth elliptic or a multiple of a smooth elliptic curve. 
The rational elliptic surface $R$ has at least two singular fibers \cite[\S8.4]{SS}. For the complete list of possible singular-fiber configurations, see also \cite{Mir}. Over each such value, $\varphi_C$ must be totally ramified:
If the ramification type is $(2,1)$ then it leaves an unramified point above the singular value, and then the corresponding fiber of $S$ would remain singular.
This contradicts the fact that every fiber of $f$ is either a smooth elliptic curve or a multiple of one. Hence $\varphi_C: C\to\mathbb P^1$ has at least two totally ramified points.

The same conclusion is easier when $\varphi$ is cyclic. 
Indeed, let $\sigma$ be a
generator of its deck group. Since $f:S\to C$ is the Iitaka
fibration, $\sigma$ preserves this fibration. Hence there exists an automorphism $\beta\in\operatorname{Aut}(C)$ such that $f\circ\sigma=\beta\circ f.$
Since $\sigma^3=\operatorname{id}_S$, we have $\beta^3=\operatorname{id}_C$. If $\beta=\operatorname{id}_C$,
then $f$ descends to a dominant rational map $S/\langle\sigma\rangle\dashrightarrow C.$
This is impossible, since
$S/\langle\sigma\rangle\sim_{\mathrm{bir}}\mathbb P^2$ and
$g(C)>0$. Thus $\beta$ has order three. Moreover, $\varphi$ descends to map below denoted as: $\varphi_C: C \to C/\langle\beta\rangle\simeq\mathbb P^1$, and $\varphi_C$ is totally ramified and over at least two points because $\mathbb{P}^1$ with one point removed is simply connected and has no nontrivial \'etale cover.

By counting Hurwitz space dimension, $C$ cannot be a general curve in $\mathcal M_{g(C)}$:
For the degree three map $\varphi_C: C\to\mathbb P^1$, let $a$ be the number of totally ramified points and $b$ the number of simple ramification points. The Riemann–Hurwitz gives
$2a+b=2g(C)+4$. 
We have proved that $a\ge2$ in both cyclic and noncyclic case of $\varphi$, the corresponding Hurwitz locus has dimension at most
\[
a+b-3=2g(C)+1-a\le2g(C)-1.
\]
For $g(C)\ge3$,
\[
2g(C)-1<3g(C)-3=\dim\mathcal M_{g(C)}.
\]Therefore its image is a proper subvariety of $\mathcal M_g$. 
Moreover, if $\operatorname{gon}(C)\ge 4$, then none of the preceding cases can occur.
\end{proof}

\begin{Ex}\label{ex:to 4.1}
We take the Jacobian rational isotrivial elliptic surface
$R\to\mathbb P^1_{[u:v]}$ with Weierstrass equation
$y^2=x^3+a\,u^2v^2x+b\,u^3v^3$ where $4a^3+27b^2\neq0$. It has two type-$I_0^*$ fibers $F_j$ ($j = 0,1$), at $u=0$ and $v=0$.

Let $\varphi:C\longrightarrow\mathbb P^1$ be the hyperelliptic double cover of a genus two curve, chosen so that $0,\infty$ are two of its six branch values. Take the relatively minimal elliptic fibration model $S$
of the normalization of $C\times_{\mathbb P^1}R.$ The two fibres $F_j'$ of $S \to C$ lying over $F_j$ are smooth which are proper transforms of the two central components (with coefficients two) in $F_j$; indeed the double cover over $R$ is totally ramified over the $4+4$ of $(-2)$-curves (with coefficients one) in the two $F_j$ meeting the $1+1$ central components and these $4+4$ become $(-1)$-curves on the covering surface and hence blown down to points on $F_j' \subset S$. 
Thus
\[
\chi(\mathcal O_S)=0,
\qquad \kappa(S)=1,
\]and $S\to C$ is smooth with a section.
But projection to the second factor gives $S\dashrightarrow R$ of degree $2$, so $\operatorname{irr}(S)=2.$
Varying $C$, the chosen pair of Weierstrass points, and the constant elliptic curve produces a positive dimensional family dominating $\mathcal M_2$. 
Thus $g(C)=2$ cannot be included in Theorem~\ref{chi0}.
\end{Ex}

\begin{Thm}\label{product}
Let $C$ be a smooth hyperelliptic curve with $g(C)\ge 2$ and $E$ be an elliptic curve. Then $\operatorname{irr}(C\times E)=4.$
\end{Thm}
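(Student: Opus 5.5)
The upper bound $\operatorname{irr}(C\times E)\le 4$ is immediate. Compose the product of the hyperelliptic double cover $C\to\mathbb P^1$ and the quotient $E\to E/\pm\simeq\mathbb P^1$ with a birational map $\mathbb P^1\times\mathbb P^1\dashrightarrow\mathbb P^2$. The remaining work is to exclude maps of degree $2$ and $3$ to $\mathbb P^2$. We note that $S=C\times E$ is properly elliptic with $\chi(\mathcal O_S)=0$, and the Iitaka fibration is the projection $f:S\to C$. This holds because $K_S=p_C^*K_C$ and $g(C)\ge2$.

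\emph{Degree two.} A degree two map gives a birational, hence biregular, involution $\iota$ of the minimal surface $S$ with $\kappa(S)=1$. As in the proof of Theorem~\ref{chi0}, $\iota$ preserves $f$ and induces $\bar\iota$ on $C$ with $C/\bar\iota\simeq\mathbb P^1$, so $\bar\iota$ is the hyperelliptic involution. Since $S=C\times E$ is a product and $\operatorname{Aut}(S)$ respects both projections ($E$ is the Albanese direction of the fibres), write $\iota(c,x)=(\bar\iota c,\psi_c(x))$. Here $c\mapsto\psi_c$ is a morphism $C\to\operatorname{Aut}(E)$. Its translation part is a map $C\to E$, and since the orbit structure is finite over automorphisms fixing origin, we may reduce to $\psi_c(x)=\epsilon x+\tau(c)$ with $\tau:C\to E$. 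The quotient $S/\iota$ is rational only if $h^0(\Omega^1)$ and $h^0(K)$ of the quotient vanish. Invariant $1$-forms include $dz$ whenever $\epsilon=1$, which would give $q>0$. If $\epsilon=-1$, then invariant $2$-forms are $\omega\wedge dz$ with $\omega$ anti-invariant under $\bar\iota$. But every holomorphic $1$-form on a hyperelliptic curve is anti-invariant, so $p_g(S/\iota)=g(C)\ge2$. The translation term $\tau$ does not change this, since the action on $H^0(\Omega^2)$ only sees the linear parts. Either way $S/\iota$ is not rational, a contradiction.

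\emph{Degree three.} In the cyclic case, the deck transformation $\sigma$ induces $\beta$ of order $3$ on $C$ with $C/\beta\simeq\mathbb P^1$, and acts on $E$-directions by $dz\mapsto\zeta dz$. Invariant $2$-forms $\omega\wedge dz$ require $\omega$ in the $\zeta^{-1}$ eigenspace of $\beta$ on $H^0(K_C)$; if $\zeta=1$ then $dz$ survives and $q>0$. I would then combine this with the hyperelliptic involution commuting with $\beta$ (it is central in $\operatorname{Aut}(C)$). This makes $C\to\mathbb P^1$ factor through an $S_3$ or $\mathbb Z/6$ quotient. A Riemann--Hurwitz and eigenspace count should show some eigenspace is nonzero, so $p_g$ or $q$ of the quotient is positive.

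In the noncyclic case, apply Pompilj's theorem \cite[Theorem 1]{CM25} to $f$. The exceptional case is excluded as in Theorem~\ref{degree4}, since $g(C)\ge 2$. In the composed case, $S$ is birational to $C\times_{\mathbb P^1}R$ for a degree three map $\varphi_C:C\to\mathbb P^1$ and a rational elliptic surface $R$. All fibres of $S$ are isomorphic to $E$, so $R$ is isotrivial with $j$ constant, and $\varphi_C$ must kill the local monodromy of every singular fibre of $R$. Hence every singular fibre of $R$ has monodromy of order dividing $3$ and $\varphi_C$ is totally ramified over it. This forces fibre types $IV$ or $IV^*$ with Euler numbers summing to $12$, so the configuration is $(IV,IV,IV)$ or $(IV,IV^*)$. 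Moreover, the trivializing cover is the cyclic triple cover of $\mathbb P^1$ branched at those points. It has genus $1$ in the first case and genus $0$ in the second, and $C$ must equal it, since any degree three cover with the same total ramification that trivializes the monodromy is that cyclic cover. This contradicts $g(C)\ge2$.

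The main obstacle I expect is the last identification: showing that a degree three $\varphi_C$ trivializing monodromy of order $3$ at the branch points is forced to be the cyclic cover. I would argue via the monodromy representation $\pi_1(\mathbb P^1\setminus B)\to\mathbb Z/3$. The fibre product being birationally $C\times E$ means this representation is trivial on $\varphi_{C*}\pi_1$. Hence the Galois closure dominates the cyclic cover, and degree three forces equality.
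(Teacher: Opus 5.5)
\textbf{Gap: the cyclic degree-three case.} You leave this case at ``an eigenspace count should show some eigenspace is nonzero,'' and that is the wrong target. Let $V_\mu$ be the $\mu$-eigenspace of $\beta^*$ on $H^0(K_C)$. Then $V_1=0$, because $C/\langle\beta\rangle\simeq\mathbb P^1$. Suppose the linear part $\lambda$ of $\sigma$ on $E$ is a primitive cube root of unity. Then there are no invariant $1$-forms, and the invariant $2$-forms are exactly $V_{\lambda^{-1}}\otimes H^0(K_E)$. So you need \emph{both} $V_\zeta$ and $V_{\zeta^2}$ to be nonzero. That ``some eigenspace is nonzero'' is automatic and proves nothing: if $H^0(K_C)=V_\lambda$, the quotient would have $p_g=q=0$ and nothing would be contradicted. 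The input you propose does not supply this either. The hyperelliptic involution is central in $\operatorname{Aut}(C)$, so the group it generates with $\beta$ is $\mathbb Z/6$, never $S_3$. It also acts by $-1$ on all of $H^0(K_C)$, so it says nothing about the $\beta$-eigenspaces.

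\textbf{How to close it.} The paper uses hyperellipticity through Castelnuovo--Severi instead. Maps of coprime degrees $2$ and $3$ to $\mathbb P^1$ force $g(C)\le(2-1)(3-1)=2$, so $g(C)=2$. Then $C\to C/\langle\beta\rangle$ has four total ramification points and $\dim V_\zeta=\dim V_{\zeta^2}=1$, which handles every value of $\lambda$. Chevalley--Weil also works in any genus: with $n_1,n_2$ branch points of the two local types, the two dimensions are $\frac{n_1+2n_2}{3}-1$ and $\frac{2n_1+n_2}{3}-1$, both positive once $g=n_1+n_2-2\ge2$. The cheapest repair inside your own proposal is to note that the cyclic case is itself composed. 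The map $f$ descends to a genus-one fibration $S/\langle\sigma\rangle\to C/\langle\beta\rangle\simeq\mathbb P^1$ with fibres $\simeq E$, and $S$ is birational to $C\times_{\mathbb P^1}(S/\langle\sigma\rangle)$. Your monodromy argument then gives $g(C)\le1$ directly.

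\textbf{Smaller points.} Apart from the cyclic case, your proposal essentially follows the paper, but a few steps need repair.
\begin{enumerate}
\item The exceptional Pompilj case cannot be excluded literally as in the detailed argument of Theorem~\ref{degree4}. That argument uses $H^0(\Omega^1_S)=f^*H^0(\Omega^1_C)$, which is false here, since $p_E$ does not factor through $p_C$. What you need is the dichotomy for the given pencil $p_C$, as the paper states it in this proof.
\item That the monodromy image is $\mathbb Z/3$ needs a reason. The paper uses $j\equiv0$. More simply, after also deleting the branch points of $\varphi_C$ off $B$, the subgroup $\varphi_{C*}\pi_1$ has index $3$ and lies in the normal subgroup $\ker\rho$. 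Since $\rho$ is nontrivial, this forces equality.
\item Your degree-two argument is valid, though the paper's is shorter: $\iota^*=-1$ on $1$-forms forces $\iota^*=+1$ on $H^0(K_C)\otimes H^0(K_E)$. Your claim that $\operatorname{Aut}(S)$ respects $p_E$ is false in general, as your own term $\tau$ shows.
\end{enumerate}
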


\begin{proof}
Let $S = C \times E$. Since we have the upper bound $\operatorname{irr}(S)\le4$, it is enough to exclude degree 2 and 3.

Suppose that
\[
\varphi:S \dashrightarrow\mathbb P^2
\]has degree 2. It determines a birational involution $\iota$ of $S$.
Because $S$ is minimal and $\kappa(S)=1$, $\iota$ is regular and preserves the Iitaka fibration
$p_C:S\longrightarrow C.$ Since $S/\langle\iota\rangle$ is rational,
$H^0(S,\Omega_S^1)^\iota=0$.
An involution is diagonalizable with eigenvalues \(\pm1\), so this implies
$\iota^*=-1$ on $H^0(S,\Omega_S^1)$. But
\[
H^0(S,\Omega_S^1)
=
H^0(C,K_C)\oplus H^0(E,K_E),
\]and the wedge map gives
\[
H^0(C,K_C)\otimes H^0(E,K_E)
\xrightarrow{\;\sim\;}
H^0(S,K_S).
\]Since $\iota^*=-1$ on each one form, it acts as $+1$ on their wedge. Therefore
$H^0(S,K_S)^\iota=H^0(S,K_S)\neq0.$ But the resolution of the rational quotient must have $p_g=0$, a contradiction. Thus
$\operatorname{irr}(S)\neq2.$

Now suppose that $\varphi:C\times E\dashrightarrow\mathbb P^2$ has degree 3.
Suppose $\varphi$ is a cyclic map of degree three with deck transformation $\sigma$. Since the Iitaka fibration is
$p_C:C\times E\to C$, $\sigma$ induces an automorphism $\beta$ on $C$.
If $\beta=\mathrm{id}$ then the quotient still dominates $C$, so it cannot be rational. Therefore $\beta$ has order 3, and rationality forces $C/\langle\beta\rangle\simeq\mathbb P^1.$ 
If $g(C)\geq3$, this is impossible. Indeed, $C$ also has a degree two hyperelliptic map. Since 2 and 3 are coprime, these two maps cannot factor through a common nontrivial curve. 
The Castelnuovo–Severi inequality (cf. \cite[Chapter VIII]{ACGH}) would then give $g(C)\leq(2-1)(3-1)=2$, a contradiction.
Thus $g(C)=2$. Then Riemann–Hurwitz formula shows that $C\to\mathbb P^1$ is totally ramified at four points. For such an order three action,
\[
H^0(C,K_C)=V_\zeta\oplus V_{\zeta^2},
\qquad
\dim V_\zeta=\dim V_{\zeta^2}=1,
\]where $\zeta$ is a primitive cube root.
Let $\lambda\in\{1,\zeta,\zeta^2\}$ be the linear part of the induced action on $E$.

(i) If $\lambda=1$, the quotient has an invariant holomorphic one form from $E$, so $q>0$.

(ii) If $\lambda=\zeta$, then\[
  V_{\zeta^2}\otimes H^0(E,K_E)
  \subset H^0(C\times E,K_{C\times E})
  \]is invariant.
  
(iii) If $\lambda=\zeta^2$, then $V_\zeta\otimes H^0(E,K_E)$ is invariant.

Thus in every case the quotient has either $q>0$ or $p_g>0$, so it cannot be rational. Hence no cyclic degree three map exists.

Suppose that
\[
\varphi: C\times E\dashrightarrow\mathbb P^2
\]is noncyclic of degree 3. Normalize $\mathbb C(\mathbb P^2)$ in $\mathbb C(C\times E)$; this gives a normal noncyclic triple plane birational to $C\times E$. 
Apply Pompilj’s theorem to the irrational pencil $p_C:S\longrightarrow C.$ The proof of Pompilj’s theorem gives the following precise dichotomy:

- either the given irrational pencil is composed with the triple plane map;

- or its base has genus one, and the triple plane is exceptional.

Since $g(C)\geq2$, the pencil $p_C$ must be composed with the triple plane map. Thus there is a diagram
\[
\begin{array}{ccc}
S & \stackrel{\varphi}{\dashrightarrow} & \mathbb P^2\\
\ \ \downarrow p_C && \downarrow h\\
C&\xrightarrow{\ \varphi_C\ }&\mathbb P^1,
\end{array}
\qquad
\deg\varphi_C=3.
\] where \(h\) is a rational genus-one pencil \cite[Section 2]{CM25}.

If \(g(C)\geq3\), the existence of \(\rho\) contradicts the Castelnuovo-Severi inequality, as above. Hence no noncyclic degree three map exists in this case.

Thus $g(C) = 2$. After resolving the pencil $h$, one obtains a rational surface with genus one fibration
$R\to\mathbb P^1$ such that,
\[
C\times E\sim_{\mathrm{bir}}C\times_{\mathbb P^1}R.
\]
Let $J(R)\to\mathbb P^1$ be its Jacobian. After base change by $\varphi_C$, this becomes the constant fibration $C\times E\to C$, so all its monodromy must be killed by $\varphi$.
At every singular fiber value, a degree three cover must therefore be totally ramified. Otherwise its ramification partition is $(2,1)$, and the unramified sheet remains the original nontrivial monodromy. 
Consequently every local monodromy $M\ne{\rm id}$ satisfies $M^3={\rm id}$. The only singular Kodaira fibers with monodromy of exact order 3 are $IV$ and $IV^*$ \cite[Chapter V, Sec.10, Tables 5-6]{BHPV}, with Euler numbers 4 and 8, respectively. Since a rational Jacobian elliptic surface has Euler number 12, its singular fibers must be
\[
(IV,IV,IV)
\quad\text{or}\quad
(IV,IV^*).
\]

We claim that the local monodromies belong to a single cyclic subgroup of order 3. Write the global Weierstrass equation of $J(R)$ as
\[
y^2z=x^3+A\,xz^2+B\,z^3,
\quad
A\in H^0(\mathbb P^1,\mathcal O(4)),\quad
B\in H^0(\mathbb P^1,\mathcal O(6)).
\]
At a fiber of type $IV$, one has $\operatorname{ord}A\ge2$, while at a fiber of type $IV^*$, one has $\operatorname{ord}A\ge3$. Therefore either possible configuration, $(IV,IV,IV)$ or $(IV,IV^*)$, forces $A\equiv0$. Hence $j(J(R))\equiv0$. Over the smooth locus, $J(R)$ is consequently a twist of the fixed elliptic curve $E_0:y^2=x^3+1$, so its monodromy is contained in the cyclic group $\operatorname{Aut}(E_0,0)\simeq\mu_6$. Since every local monodromy has exact order 3, the global monodromy image is the common subgroup $\mu_3\simeq\mathbb Z/3\mathbb Z$. Hence
$\operatorname{Im}\rho=\langle M\rangle\simeq\mathbb Z/3$.
By using monodromy cover, there is a smooth projective curve $C'$ such that $C'\to\mathbb P^1$ is a cyclic cover of degree three totally ramified at three or two points, respectively. 
This cover also trivializes $J(R)$. The Riemann–Hurwitz formula gives
\[g(C')=
\begin{cases}
1,&(IV,IV,IV),\\
0,&(IV,IV^*).
\end{cases}
\]
Let $C^\circ=C\setminus\phi_C^{-1}(D)$. Triviality of the pulled-back monodromy gives
\[
(\phi_C)_*\pi_1(C^\circ)\subseteq\ker\rho_{\mathrm{mon}}.
\]
Hence ${\phi_C |}_{C^\circ}$ lifts to the monodromy cover $U'\to U$, and this lift extends to a morphism $C\to C'$. Since both covers of $\mathbf P^1$ have degree 3, the map $C\to C'$ has degree $1$.
So $C$ is isomorphic to $C'$, contradicting $g(C)=2$.
Thus no noncyclic degree three map exists either. Therefore $\operatorname{irr}(C\times E)=4$.
\end{proof}

\begin{Rmk}
In Theorem~\ref{product}, the condition $g(C)\ge 2$ cannot be weakened. The degree of irrationality of a product of two elliptic curves is three \cite[Theorem 3.2]{Lee}.
\end{Rmk}

\section{Dolgachev surfaces}

A \emph{Dolgachev surface} is a properly elliptic surface with two multiple fibers of multiplicities $p$ and $q$ where $p$ and $q$ are relatively prime. Let $\pi: X\to\PP^1$ be a rational elliptic surface of index 1 and let $(p, q)$ be a pair of positive integers. We form a new algebraic surface $S(p,q)$ by performing logarithmic transforms on two of the smooth or multiplicative fibers of $\pi$, one of order $p$ and the other of order $q$. Then there is an elliptic fibration $\pi: S(p, q) \to \PP^1$ with two multiple fibers of multiplicities $p$ and $q$. Dolgachev \cite{Dol} showed that if $(p, q) = 1$, then $S(p, q)$ is algebraic and simply connected. We shall use the term \emph{Dolgachev surface of the type $(p, q)$} to mean an algebraic surface of the form $S(p, q)$ where $p$ and $q$ are relatively prime and $p,q > 1$. For given relatively prime integers p and q, the
moduli space of Dolgachev surfaces with multiple fibers of multiplicities p and q is irreducible \cite[Proposition 3.8]{FM88}.

\begin{Prop}
Let $S:=S(p, q)$ be a very general one in the moduli of Dolgachev surface of type $(p, q)$. Then $\irr(S)\ge 3$.
\end{Prop}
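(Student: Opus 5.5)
Since $\kappa(S)=1$, the surface $S$ is not rational, so $\irr(S)\ge 2$. It therefore suffices to show that a very general Dolgachev surface admits no dominant rational map $\varphi:S\dashrightarrow\mathbb P^2$ of degree two. Such a map is automatically Galois; its deck transformation is a birational involution $\iota$ of $S$. Because $S$ is minimal of Kodaira dimension one, $\iota$ is biregular and preserves the Iitaka fibration $\pi:S\to\mathbb P^1$. So we must show that a very general $S(p,q)$ has no biregular involution $\iota$ with $S/\iota$ rational.

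First, $\iota$ induces an automorphism $\bar\iota$ of $\mathbb P^1$. The canonical bundle formula $K_S\sim_{\mathbb Q}\pi^*(K_{\mathbb P^1}+\mathcal L+(1-\tfrac1p)p_1+(1-\tfrac1q)p_2)$ singles out the two multiple fibres. Since $p\neq q$, both fibres are preserved: $\bar\iota$ fixes $p_1$ and $p_2$.

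\textbf{Case $\bar\iota=\mathrm{id}$.} Then $\iota$ acts fibrewise on the generic fibre $S_\eta$, a genus one curve over $\mathbb C(t)$ with a torsor structure of index $pq$ under its Jacobian. An involution of a genus one curve is either a translation by a 2-torsion point of $J(S)_\eta$, or it has fixed points (a ``$-1$'' type involution). A $-1$ type involution would give the curve a $\mathbb C(t)$-rational divisor of degree $\le 4$ (the fixed-point locus, degree 4, is rational over the base). This would force the index $pq$ to divide a number coprime to both $p$ and $q$, or at least contradict the index being $pq\ge 6$. A translation by a 2-torsion section would require the rational Jacobian surface $J(S)=X$ to carry a nontrivial 2-torsion section. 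A very general rational elliptic surface (twelve $I_1$ fibres) has Mordell--Weil group $E_8$, which is torsion free, and the moduli of Dolgachev surfaces (irreducible by \cite[Proposition 3.8]{FM88}) dominates the moduli of the underlying Jacobians. So this case is excluded for very general $S$.

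\textbf{Case $\bar\iota\ne\mathrm{id}$.} Then $\bar\iota$ is an involution of $\mathbb P^1$ fixing exactly $p_1,p_2$. The Jacobian $X=J(S)$ then inherits an involution covering $\bar\iota$, because the Jacobian construction is functorial. Its discriminant divisor, consisting of twelve points for general $X$, must be $\bar\iota$-invariant. Normalize $p_1=0$, $p_2=\infty$, so that $\bar\iota(t)=-t$. The invariant configurations form a proper closed subset of the data (discriminant positions modulo $\operatorname{PGL}_2$ fixing $0,\infty$, together with the Weierstrass coefficients). More precisely, one needs the Weierstrass data $(A,B)$ of degrees $(4,6)$ to be invariant up to the $\mathbb G_m$-action under $t\mapsto -t$, which is a positive-codimension condition. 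Moreover, the positions $p_1,p_2$ of the logarithmic transforms relative to the discriminant are free moduli. Hence a very general member has no such $\bar\iota$.

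The main obstacle is making both generality statements rigorous inside the moduli of Dolgachev surfaces. This requires knowing that the parameters (the Jacobian $X$, the two points $p_1,p_2$, and the logarithmic-transform data) vary independently, and that the resulting countable union of closed loci is proper. I would rely on the description in \cite{FM88}: Dolgachev surfaces of type $(p,q)$ are parametrized by a rational elliptic surface, two fibres, and torsion data, and this parametrization dominates the moduli. Given that, both exceptional loci are proper closed in an irreducible space, and a very general $S$ avoids them. Hence $\irr(S)\ge 3$.
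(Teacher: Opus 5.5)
Your proposal is correct and follows essentially the same route as the paper. You reduce to a biregular involution preserving the Iitaka fibration, force $\bar\iota=\mathrm{id}$ by generality of the configuration of singular fibres together with the two multiple-fibre points, and rule out fibrewise translations by the torsion-freeness of the Mordell--Weil group of a general rational elliptic surface. The only variation is the fibrewise $-1$ case. There you use the degree-$4$ fixed-point multisection, which forces $p\mid 4$ and $q\mid 4$; this is impossible since one of $p,q$ is odd and $>1$. The paper instead observes that $-\xi=\xi$ gives $2\xi=0$ for a Weil--Ch\^atelet class of order $pq$. Both arguments work.
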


\begin{proof}
Suppose that
\[
\varphi:S\dashrightarrow\mathbb P^2
\]
has degree 2. Since a quadratic extension is Galois, it produces a birational involution $\iota:S\dashrightarrow S$.
Because $S$ is minimal and $\kappa(S)=1$, every birational self-map is biregular and preserves the unique Iitaka elliptic fibration $S \to \mathbb{P}^1$ (noting that the base is rational since $q(S) = 0$). Hence \(\iota\) induces an involution $\bar\iota:\mathbb P^1\longrightarrow\mathbb P^1.$

First, eliminate involutions acting on the base. Any such $\bar\iota$ must preserve the $j$-function, the two multiple-fiber points $u,v$ with their multiplicities $p,q$. Since $p\ne q$, $\bar\iota$ 
must fix $u$ and $v$ separately. Choose a general rational elliptic surface $J(S(p, q))$ with twelve $I_1$ fibers and choose $u,v$ generally. Its marked fiber configuration has trivial stabilizer in 
$\operatorname{PGL}_2$. Thus $\bar\iota=\operatorname{id}_{\mathbb P^1}$. So any involution of $S$ must act fiberwise.

Second, eliminate fiberwise involutions. For a general non-isotrivial Jacobian, $\operatorname{Aut}_K(E,0)=\{\pm1\}$. An automorphism of the torsor has a translation part and a linear part in $\{\pm1\}$.
If the linear part is $+1$ the automorphism is translation by $T\in E(K)[2]$. But a general rational elliptic surface has no nonzero rational $2$-torsion, so this case is impossible. If the linear part is -1 then  such an automorphism can exist on the torsor only if -1 stabilizes an element $\xi$ in Weil–Ch\^atelet group. So, $2\xi=0$, but $\xi$ has order $pq$ \cite[Chapter 4]{CDL}, so we get a contradiction. Thus, for a general Dolgachev surface, $S(p,q)$ has no involution, so $\operatorname{irr}(S(p,q))\ne2$.
\end{proof}

\begin{Rmk}
Since Dolgachev surface $S(p,q)$ has no irrational pencil, we cannot apply Pompilj’s theorem \cite[Theorem 1]{CM25} to study degree 3 map.
\end{Rmk}

\begin{Lem}\label{local-global}
Let $J\to B$ be a Jacobian elliptic surface, let $T$ be a torsion section of order $p$, and let $\phi:J\to J'=J/\langle T\rangle$ be the associated degree $p$ isogeny. Suppose that $S\to B$ is obtained locally at $b\in B$ 
by an $m$-fold logarithmic transform with datum $a$ of exact order $m$. Then translation by $T$ descends to $S$, and the relatively minimal model of $S/\langle T\rangle$ is locally the logarithmic transform of $J'$ with datum $\phi(a)$. In particular, its fiber multiplicity at $b$ is $\operatorname{ord}\bigl(\phi(a)\bigr)$.
\end{Lem}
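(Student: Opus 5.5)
The proof will be local over $b$, followed by a gluing argument. Fix a small disc $\Delta\ni b$ over which $J|_\Delta$ has a smooth (or multiplicative) fiber at $b$ and $T$ is a section of order $p$. I will use the standard analytic description of the logarithmic transform. Over $\Delta^*$, $S$ is the quotient of $J|_{\tilde\Delta^*}$ by a twisted action, where $\tilde\Delta\to\Delta$, $s\mapsto s^m=t$. The generator of $\mu_m$ acts by deck transformation on $\tilde\Delta$ composed with translation by the local section $a$ of exact order $m$ in the fibers. Equivalently, $S|_\Delta$ is $(J|_{\tilde\Delta})/\mu_m$, where $\zeta$ acts as $(s,x)\mapsto(\zeta s,x+a)$, followed by a relatively minimal resolution. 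Away from $\Delta$, $S$ and $J$ are isomorphic over $B$.

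Step one is to show that translation $\tau_T$ descends to $S$. The group law is commutative, and $T$ is a global section of $J$, so $T$ pulled back to $\tilde\Delta$ is $\mu_m$-invariant. Hence $\tau_T$ commutes with the $\mu_m$-action $(s,x)\mapsto(\zeta s,x+a)$. It therefore induces a fiber-preserving automorphism of order $p$ on $(J|_{\tilde\Delta})/\mu_m$, and hence on its minimal resolution. Here I use the fact that birational self-maps of relatively minimal elliptic surfaces over $B$ are biregular. On the complement of $b$ it agrees with $\tau_T$ on $J$, since the gluing in the log transform is by translations, which commute with $\tau_T$. So the local and global automorphisms glue to an automorphism of $S$.

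Step two identifies the quotient. On $J|_{\tilde\Delta}$ the actions of $\mu_m$ and $\langle T\rangle$ commute, so quotienting in either order gives the same result. Quotienting first by $\langle T\rangle$ gives $J'|_{\tilde\Delta}$, via the isogeny $\phi$, a homomorphism over the base. The induced $\mu_m$-action is $(s,y)\mapsto(\zeta s,y+\phi(a))$, because $\phi(x+a)=\phi(x)+\phi(a)$. This is exactly the logarithmic transform datum $\phi(a)$ on $J'$. Since $\phi(a)$ has exact order $\operatorname{ord}\phi(a)$, dividing $m$, the effective group is $\mu_m$ modulo the kernel of its action on fibers. The multiple fiber at $b$ then has multiplicity $\operatorname{ord}(\phi(a))$: the central fiber is the image of a fiber of $J'|_{\tilde\Delta}$ under a quotient whose stabilizer on the base point acts by fiber translations of that order. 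Finally, I pass to relatively minimal models on both sides and use uniqueness of the relatively minimal model over $B$.

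The main obstacle is step two when $\operatorname{ord}\phi(a)<m$, namely when $a\in\ker\phi$ up to multiples. In that case the $\mu_m$-action on $J'|_{\tilde\Delta}$ is not free on the central fiber. The quotient by the non-free subgroup $\mu_{m/\operatorname{ord}\phi(a)}$ then acts only on the base and yields a base change back to $\Delta$. I will handle this by factoring $\mu_m\to\mu_{m'}$ with $m'=\operatorname{ord}\phi(a)$. The first quotient simply undoes part of the base change $\tilde\Delta\to\Delta$, and the remaining quotient is a genuine $m'$-fold logarithmic transform. I will also check that the multiplicative-fiber case works the same way, using the group structure of $J^\#$, the smooth part, in place of full fibers.
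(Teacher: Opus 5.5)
Your proposal is correct and is essentially the paper's proof. Both use the local model $\widetilde J/\langle g_a\rangle$ with $g_a(x,s)=(x+a,\zeta_m s)$, note that $\tau_T$ commutes with $g_a$, pass to $J'$ where the induced generator is $g_{\phi(a)}$, and first divide by the base-only subgroup generated by $g_{\phi(a)}^{m'}$ to reduce to a genuine $m'$-fold transform. Your gluing remark over $\Delta^*$ (the identification there is a fiberwise translation, so it commutes with $\tau_T$) makes explicit what the paper leaves implicit.

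One caution concerns your promised multiplicative-fiber check, which would not go through verbatim. There the transform datum generally lives on the resolved base change with an $I_{nm}$ fiber, rather than being a $\mu_m$-invariant torsion section over $\Delta$. Also, $J/\langle T\rangle$ acquires $A_{p-1}$ points at the nodes when $T$ meets the identity component. Neither the paper's proof nor its application to Dolgachev surfaces needs that case, since the transforms there are performed on smooth fibers.
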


\begin{proof}
By the local cyclic quotient description of a logarithmic transform, the transformed neighborhood is $\widetilde J/\langle g_a\rangle,\ g_a(x,s)=(x+a,\zeta_m s)$ \cite[Chapter V]{BHPV}. Translation by $T$ commutes with $g_a$. After taking the quotient by $\langle T\rangle$, the induced generator is
\[
g_{\phi(a)}(\bar x,s)=
(\bar x+\phi(a),\zeta_m s)
\]
on the base-changed family $J'$. If $m'=\operatorname{ord}(\phi(a))$, then $g_{\phi(a)}^{m'}$ acts only on the base coordinate. Quotienting first by this ineffective subgroup reduces the construction to the standard $m'$-fold logarithmic transform. Hence the resulting multiplicity is $m'$. The identification of the quotient generic fiber with the isogenous elliptic curve $J/\langle T\rangle$ is the quotient-by-torsion construction of Sch\"utt and Shioda, 
\cite[\S 7.7]{SS}.
\end{proof}

\begin{Ex}
Let $J\to\mathbb P^1$ be a rational Jacobian elliptic surface which admits a torsion section $T$ \cite{SS} of prime order $p=2$ or $3$. 
Choose two smooth fibers $J_u$, $J_v$ and perform logarithmic transforms of relatively prime orders $p$ and $q$, and the multiplicity-$p$ logarithmic transform datum to be generated by $T_u(a)$ where $a$ is the $p$-torsion element of the fiber at $u\in \PP^1$ given by $T$. By Lemma~\ref{local-global}, translation by $T$ extends to $S(p,q)$. Then

- the multiplicity-$p$ datum is generated by $T_u(a)$, so quotienting by $\langle T\rangle$ kills it;  let smooth surface $Y$ be the relatively minimal elliptic fibration model of the elliptic surface $S(p,q)/\langle T\rangle$;

- the multiplicity-$q$ datum retains order $q$, since $(p,q)=1$,

- $\chi(\mathcal O_Y)=1$, and

- the canonical bundle formula gives $K_Y\equiv-F_q$, hence $\kappa(Y)=-\infty$.

Since $q(Y)=0$, $Y$
is a rational genus one surface with a unique multiple fiber, of multiplicity $q$. In particular, the quotient induces a generically finite rational map
$S(p,q)\dashrightarrow Y$ of degree $p$. When $p=2$,  $\irr(S(2,q))=2$ for this special Dolgachev surface $S(2, q)$.

Take a rational elliptic surface in Tate normal form
\[
J_3:\quad
y^2z+a_1(t)xyz+a_3(t)yz^2=x^3,
\]
where $a_1\in H^0(\mathbb P^1,\mathcal O(1))$, $a_3\in H^0(\mathbb P^1,\mathcal O(3))$ are general. $T=(0:0:1)$ is a section of order 3. Its discriminant is $\Delta=a_3^3(a_1^3-27a_3)$.
Choose $q\ge2$ with $3\nmid q$, and perform logarithmic transforms of orders $3$ and $q$, using $T_u(a)$ for the order three transform. Translation by $T$ gives (birationally) a quotient map:
\[
S(3,q)\dashrightarrow Y
\] 
of degree 3, with $Y$ rational. Therefore $\irr (S(3,q))\le3$. To make the degree exactly 3, choose the coefficients, logarithmic transform points and the order $q$ torsion datum generally enough that: 

- $J(\mathbb C(\mathbb P^1))[2]=0$; 

- the marked fiber configuration has no nontrivial base involution;

- the generic $j$-invariant is neither 0 nor 1728.

Then $S(3,q)$ has no involution, so $\irr(S(3, q))=3$.
\end{Ex}

We now finish the paper by suggesting an interesting question on Dolgachev surfaces. For a Dolgachev surface $S(p,q)$, the present upper bound $\operatorname{irr}(S)\leq2p^2q^2$ is very large, while the given lower bound in Remark~\ref{bound} is essentially independent of $p,q$.

\begin{Question}
Does $\operatorname{irr}(S(p,q))\to\infty$ for a very general $S(p,q)$ as $pq\to\infty$?
\end{Question}

\end{document}